 \def\zhou#1 {\fbox {\footnote {\ }}\ \footnotetext { From Yue: {\color{red}#1}}}
 \def\XD#1 {\fbox {\footnote {\ }}\ \footnotetext { From Xiang-dong: {\color{blue}#1}}}
 \def\ferruh#1 {\fbox {\footnote {\ }}\ \footnotetext { From Ferruh: {\color{green}#1}}}
\newcommand{\Z}{{\mathbb Z}}
\newcommand{\F}{\mathbb F}
\newcommand{\bbS}{\mathbb S}
\newcommand{\bbP}{\mathbb P}
\newcommand{\Tr}{ \ensuremath{ \mathrm{Tr}}}
\newcommand{\cy}[1]{C_{#1}}
\newcommand{\Res}{{\rm Res\,}}
\newcommand{\be}{\begin{eqnarray}}
\newcommand{\ee}{\end{eqnarray}}
\theoremstyle{plain}
\newtheorem{theorem}{Theorem}[section]
\newtheorem{definition}[theorem]{Definition}
\newtheorem{lemma}[theorem]{Lemma}
\newtheorem{corollary}[theorem]{Corollary}
\newtheorem{proposition}[theorem]{Proposition}
\newtheorem{question}[theorem]{Question}
\newtheorem*{remark}{Remark}
\numberwithin{equation}{section}
\begin{document}

\color{black}

\title{Switchings of Semifield multiplications}
\author{Xiang-dong Hou, Ferruh \"Ozbudak and Yue Zhou}
\thanks{Xiang-dong Hou is with the Department of Mathematics and Statistics, University of South Florida, Tampa, FL 33620, USA; e-mail: xhou@usf.edu; research partially supported by NSA grant H98230-12-1-0245.}
\thanks{Ferruh \"Ozbudak is with the Department of Mathematics and the Institute of Applied Mathematics, Middle East Technical
University,  Dumlup{\i}nar Bulvar{\i} No. 1, 06800, Ankara, Turkey;
e-mail: ozbudak@metu.edu.tr; research partially supported by TUB\.{I}TAK under Grant no. TBAG-112T011}
\thanks{Yue Zhou is with the College of Science, National University of Defense Technology,  Yanwachi Street No.\ 137, 410073, Changsha, China; e-mail: yue.zhou.ovgu@gmail.com.
This work is partially supported by the National Natural Science Foundation of China (No. 61272484) and the National Basic Research Program of China (No. 2013CB338002). }
\date{\today}

\keywords{cyclic code, finite field, linearized polynomial, semifield, the Hasse-Weil-Serre bound}

\subjclass[2000]{11T55, 12E20, 12K10, 14H05, 94B15}

\maketitle

\begin{abstract}
Let $B(X,Y)$ be a polynomial over $\F_{q^n}$ which defines an $\F_q$-bilinear form on the vector space $\F_{q^n}$, and let $\xi$ be a nonzero element in $\F_{q^n}$. In this paper, we consider for which $B(X,Y)$, the binary operation $xy+B(x,y)\xi$ defines a (pre)semifield multiplication on $\Bbb F_{q^n}$. We prove that this question is equivalent to finding $q$-linearized polynomials $L(X)\in\F_{q^n}[X]$ such that $\Tr_{q^n/q}(L(x)/x)\neq 0$ for all $x\in\F_{q^n}^*$. For $n\le 4$, we present several families of $L(X)$ and we investigate the derived (pre)semifields. When $q$ equals a prime $p$, we show that if $n>\frac{1}{2}(p-1)(p^2-p+4)$, $L(X)$ must be $a_0 X$ for some $a_0\in\F_{p^n}$ satisfying $\Tr_{q^n/q}(a_0)\neq 0$. Finally, we include a natural connection with certain cyclic codes over finite fields, and we apply the Hasse-Weil-Serre bound for algebraic curves to prove several necessary conditions for such kind of $L(X)$.
\end{abstract}

\section{Introduction}
A \emph{semifield} $\bbS$ is an algebraic structure satisfying all the axioms of a skewfield except (possibly) the associativity. In other words, it satisfies the following axioms:
    \begin{enumerate}
      \item[(S1)] $(\bbS,+)$ is a group, with identity element $0$;
      \item[(S2)] $(\bbS\setminus\{0\},*)$ is a quasigroup;
      \item[(S3)] $0*a=a*0=0$ for all $a$;
      \item[(S4)] The left and right distributive laws hold, namely for any $a,b,c\in\bbS$,
      $$(a+b)*c=a*c+b*c,$$
      $$a*(b+c)=a*b+a*c;$$
      \item[(S5)] There is an element $e\in \bbS$ such that $e*x=x*e=x$ for all $x\in \bbS$.
    \end{enumerate}
A finite field is a trivial example of a semifield. Furthermore, if $\bbS$ does not necessarily have a multiplicative identity, then it is called a \emph{presemifield}.  For a presemifield $\Bbb S$, $(\Bbb S,+)$ is necessarily abelian \cite{knuth_finite_1965}.
A semifield is not necessarily commutative or associative. However, by Wedderburn's Theorem \cite{wedderburn_theorem_1905}, in the finite case, associativity implies commutativity. Therefore, a non-associative finite commutative semifield is the closest algebraic structure to a finite field. We refer to \cite{lavrauw_semifields_2011} for a recent and comprehensive survey.

The first family of non-trivial semifields was constructed by Dickson \cite{dickson_commutative_1906} more than a century ago. In \cite{knuth_finite_1965}, Knuth showed that the additive group of a finite semifield $\bbS$ is an elementary abelian group, and the additive order of the nonzero elements in $\bbS$ is called the \emph{characteristic} of $\bbS$. Hence, any finite semifield can be represented by $(\mathbb{F}_q, +, *)$, where $q$ is a power of a prime $p$. Here $(\mathbb{F}_q, +)$ is the additive group of the finite field $\mathbb{F}_q$ and $x*y$ can be written as $x*y=\sum_{i,j}a_{ij} x^{p^i}y^{p^j}$, which forms a mapping from $\mathbb{F}_q\times \mathbb{F}_q$ to $\mathbb{F}_q$.

Geometrically speaking, there is a well-known correspondence, via coordinatisation, between (pre)semifields and projective planes of Lenz-Barlotti type V.1, see \cite{dembowski_finite_1997,hughes_projective_1973}. In \cite{albert_finite_1960}, Albert showed that two (pre)semifields coordinatise isomorphic planes if and only if they are isotopic.
\begin{definition}
    Let $\bbS_1=(\mathbb{F}_p^n, +, *)$ and $\bbS_2=(\mathbb{F}_p^n, +, \star)$ be two presemifields. If there exist three bijective linear mappings $L, M, N:\mathbb{F}_{p}^n\rightarrow \mathbb{F}_p^n$ such that
    $$M(x)\star N(y)=L(x*y)$$
    for any $x,y\in\mathbb{F}_p^n$, then $\bbS_1$ and $\bbS_2$ are called \emph{isotopic}, and the triple $(M,N,L)$ is called  an \emph{isotopism} between $\bbS_1$ and $\bbS_2$.
\end{definition}

Let $\mathbb{P}=(\mathbb{F}_{p^n}, +, *)$ be a presemifield. We can obtain a semifield from it via isotopisms in several ways, such as the well known Kaplansky's trick (see \cite[page 2]{lavrauw_semifields_2011}). The following method was recently given by Bierbrauer \cite{bierbrauer_semifields_2012}. Define a new multiplication $\star$ by the rule
\begin{equation}\label{eq:presemi->semi}
    x\star y := B^{-1}(B_1(x)*y),
\end{equation}
where $B(x):=1*x$ and $B_1(x)*1=1*x$. We have $x\star1= \allowbreak B^{-1}(B_1(x)*1)= \allowbreak B^{-1}(1*x)=x$ and $1\star x=B^{-1}(B_1(1)*x)=B^{-1}(1*x)=x$, thus $(\mathbb{F}_{p^n}, +, \star)$ is a semifield with identity $1$. In particular,  when $\mathbb{P}$ is commutative, $B_1$ is the identity mapping.

Let $\mathbb{S}=(\mathbb{F}_{p^n},+,*)$ be a semifield. The subsets
\begin{equation*}
\begin{aligned}
  N_l(\mathbb{S})=\{a\in \mathbb{S}: (a*x)*y=a*(x*y) \text{ for all }x,y\in \mathbb{S}\},\\
  N_m(\mathbb{S})=\{a\in \mathbb{S}: (x*a)*y=x*(a*y) \text{ for all }x,y\in \mathbb{S}\},\\
  N_r(\mathbb{S})=\{a\in \mathbb{S}: (x*y)*a=x*(y*a) \text{ for all }x,y\in \mathbb{S}\},
\end{aligned}
\end{equation*}
are called the \emph{left, middle} and \emph{right nucleus} of $\mathbb{S}$, respectively. It is easy to check that these sets are finite fields. The subset $N(\mathbb{S})=N_l(\mathbb{S})\cap N_m(\mathbb{S}) \cap N_r(\mathbb{S})$ is called the \emph{nucleus} of $\mathbb{S}$. It is easy to see, if $\mathbb{S}$ is commutative, then $N_l(\mathbb{S})=N_r(\mathbb{S})$ and $N_l(\mathbb{S})\subseteq N_m(\mathbb{S})$, therefore $N_l(\mathbb{S})=N_r(\mathbb{S})=N(\mathbb{S})$. In \cite{hughes_projective_1973}, a geometric interpretation of these nuclei is discussed. The subset $\{a\in \bbS: a*x =x*a\text{ for all } x\in\bbS \}$ is called the \emph{commutative center} of $\bbS$ and its intersection with $N(\bbS)$ is called the \emph{center} of $\mathbb{S}$.

%

Let $G$ be a group and $N$ a subgroup. A subset $D$ of $G$  is called a \emph{relative difference set} with parameters $(|G|/|N|,|N|,|D|,\lambda)$, if the list of differences of $D$ covers every element in $G\setminus N$ exactly $\lambda$ times, and no element in $N\setminus \{0\}$. We call $N$ the \emph{forbidden subgroup}.

Jungnickel \cite{jungnickel_automorphism_1982} showed that  every semifield $\bbS$ of order $q$ leads to a $(q,q,q,1)$-relative difference set $D$ in a group $G$ which is not necessarily abelian. Assume that $\bbS$ is commutative.  If $q=p^n$ and $p$ is odd, then $G$ is isomorphic to the elementary abelian group $\cy{p}^{2n}$; if $q=2^n$, then $G\cong\cy{4}^n$.
($C_m$ is the cyclic group of order $m$.)

Let $p$ be an odd prime. A function $f:\F_{p^n}\rightarrow \F_{p^n}$ is called \emph{planar}, if the mapping
\[x\mapsto f(x+a)-f(x)\]
is a permutation of $\F_{p^n}$ for every $a\in\F_{p^n}^*$. Planar functions were first defined by Dembowski and Ostrom in \cite{dembowski_planes_1968}.
It is not difficult to verify that planar functions over $\F_{p^n}$ are equivalent to  $(p^n,p^n,p^n,1)$-relative difference sets in $\cy{p}^{2n}$. Planar functions over $\F_{2^n}$,  introduced recently in \cite{schmidt_planar_2013,zhou_2n2n2n1-relative_2013}, has a slightly different definition: A function $f:\F_{2^n}\rightarrow \F_{2^n}$ is called \emph{planar}, if the mapping
\[x\mapsto f(x+a)+f(x)+ax\]
is a permutation of $\F_{2^n}$ for every $a\in\F_{2^n}^*$. They are equivalent to $(2^n,2^n,2^n,1)$-relative difference sets in $\cy{4}^{n}$; see \cite[Theorem 2.1]{zhou_2n2n2n1-relative_2013}.

Let $f$ be a planar function over $\F_{q^n}$, where $q$ is a power of prime. A \emph{switching} of $f$ is a planar function of the form $f+g\xi$ where $g$ is a mapping from $\F_{q^n}$ to $\F_q$ and $\xi\in\F_{q^n}^*$. Switchings of planar functions over $\F_{p^n}$, where $p$ is an odd prime, were investigated by Pott and the third author in \cite{pott_switching_2010}. In \cite{zhou_2n2n2n1-relative_2013}, it is proved that switchings of the planar function $f(x)=0$ defined over $\F_{2^n}$ can be written as affine polynomials $\sum a_i x^{2^i}+b$, which are equivalent to $f(x)$ itself. 

In the present paper, we will investigate the switchings of (pre)semifield multiplications. To be precise, we will consider when the binary operation
\[x*y= x\star y+ B(x,y)\xi\]
on $\F_{q^n}$ defines a (pre)semifield multiplication, where $\star$ is a given (pre)semifield multiplication, $\xi\in\F_{q^n}^*$ and $B(x,y)$ is an $\F_q$-bilinear form from $\F_{q^n}\times\F_{q^n}$ to $\F_q$. (One may identify $\F_{q^n}$ with $\F_q^n$, although it is not necessary.) We call $x*y$ a \emph{switching neighbour} of $x\star y$. In particular, we will concentrate on the case in which $\star$ is the multiplication of a finite field.

In Section \ref{se:preliminary}, we show that finding $B$ such that $x*y:= xy+ B(x,y)\xi$ defines a (pre)semifield multiplication is equivalent to finding $q$-linearized polynomials $L(X)\in\F_{q^n}[X]$ such that $\Tr_{q^n/q}(L(x)/x)\neq 0$ for all  $x\in\F_{q^n}^*$. For $n\le 4$,  we give in Section \ref{se:small n} several $q$-linearized polynomials $L(X)\in\F_{q^n}[X]$ satisfying this condition and we discuss the presemifields of the corresponding switchings. In Section \ref{se:large n, q=p}, we  prove that when $q=p$ is a prime and $n>(p-1)(p^2-p+4)/2$, the only $L(X)$ satisfying the above condition are those of the form $\beta X$ where $\text{Tr}_{p^n/p}(\beta)\ne 0$. In Section \ref{se:large n and cyclic codes}, we explore a connection of the $q$-linearized polynomials $L(X)$ satisfying the above condition with certain cyclic codes over $\Bbb F_q$. Finally, in Section \ref{se:large n and curves} we derive several necessary conditions for the existence of the $q$-linearized polynomials $L(X)$ from the Hasse-Weil-Serre bound for algebraic curves over finite fields.


\section{Preliminary discussion}\label{se:preliminary}
Let $\Tr_{q^n/q}$ be the trace function from $\F_{q^n}$ to $\F_q$. We define 
\[B(x,y):= \Tr_{q^n/q}(\sum_{i=0}^{n-1}b_i x y^{q^i}),\qquad x,y\in\Bbb F_{q^n},\]
where $b_i\in\Bbb F_{q^n}$.
It is easy to see that $B(x,y)$ defines an $\F_q$-bilinear form  from $\F_{q^n}\times \F_{q^n}$ to $\F_q$, and every such bilinear form can be written in this way.

In the next theorem, we consider the switchings of a finite field multiplication.

\begin{theorem}\label{th:main}
	Let $x*y:= xy+ B(x,y)\xi$, where $B(x,y):= \Tr_{q^n/q}(\sum_{i=0}^{n-1}b_i x y^{q^i})$, $b_i\in\Bbb F_{q^n}$, and $\xi\in\F_{q^n}^*$. Then $*$ defines a presemifield multiplication on $\F_{q^n}$ if and only if for any $a\in\Bbb F_{q^n}^*$,
	$\Tr_{q^n/q}(M(a)/a)\neq -1$, where $M(X):= \xi\sum_{i=0}^{n-1}b_i X^{q^i}\in\F_{p^n}[X]$.
\end{theorem}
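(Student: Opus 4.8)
The plan is to recall what it means for $*$ to be a presemifield multiplication and translate each required axiom into a condition on $M$. Since $x*y = xy + B(x,y)\xi$ is manifestly $\F_q$-bilinear in $x$ and $y$ (the product $xy$ is $\F_q$-bilinear, indeed $\F_{q^n}$-bilinear, and $B$ is $\F_q$-bilinear by construction), axioms (S1), (S3), and (S4) hold automatically, as does the abelianness of $(\F_{q^n},+)$. The only nontrivial axiom is (S2): that $(\F_{q^n}^*,*)$ is a quasigroup. For a finite bilinear operation this amounts to the no-zero-divisor condition, namely that $a*y = 0$ forces $y=0$ and $x*a=0$ forces $x=0$, for every nonzero $a$. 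So first I would reduce the whole theorem to verifying this no-zero-divisor condition.

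Next I would fix $a\in\F_{q^n}^*$ and analyze the map $y \mapsto a*y = ay + B(a,y)\xi$. Writing $B(a,y) = \Tr_{q^n/q}(\sum_i b_i a\, y^{q^i}) = \Tr_{q^n/q}(a\,\tilde M(y))$ where $\tilde M(y) = \sum_i b_i y^{q^i}$, the map is $\F_q$-linear in $y$, so it is a bijection iff its kernel is trivial. The key observation is that $B(a,y)\in\F_q$, so $a*y = ay + (\text{scalar in }\F_q)\cdot\xi$; when $a*y=0$ we get $ay = -B(a,y)\xi$, i.e. $y = -B(a,y)\,a^{-1}\xi$, so any kernel element is an $\F_q$-multiple of $a^{-1}\xi$. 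Plugging $y = c\,a^{-1}\xi$ (with $c\in\F_q$) back into $a*y=0$ and using the bilinearity of $B$, the condition collapses to a single scalar equation in $c$, and a nonzero solution $c$ exists precisely when a certain trace equals $-1$. Carrying out this substitution, $B(a, a^{-1}\xi) = \Tr_{q^n/q}(\sum_i b_i a (a^{-1}\xi)^{q^i})$, and after recognizing $\xi\sum_i b_i X^{q^i} = M(X)$ one identifies the relevant quantity as $\Tr_{q^n/q}(M(a)/a)$, giving that the left-kernel is trivial iff $\Tr_{q^n/q}(M(a)/a)\neq -1$.

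The main obstacle I anticipate is the bookkeeping that converts $B(a, a^{-1}\xi)$ into $\Tr_{q^n/q}(M(a)/a)$: one must push the Frobenius powers $q^i$ correctly through the trace and use the $\Tr_{q^n/q}$-invariance under $x\mapsto x^q$ to re-center the expression so that the argument of $M$ becomes $a$ itself rather than $a^{-1}\xi$. Concretely, $\Tr_{q^n/q}(a\sum_i b_i (a^{-1}\xi)^{q^i})$ should be rewritten, using the Frobenius-invariance of the trace on each summand, as $\Tr_{q^n/q}\bigl(a^{-1}\cdot\xi\sum_i b_i a^{q^i}\bigr) = \Tr_{q^n/q}(M(a)/a)$; verifying this rewriting is the one genuinely computational point. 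After that, the analysis of $x*a=0$ (the right factor) is symmetric and yields the same trace condition, so no separate argument is needed. Finally I would note that over a finite set, injectivity of the left and right multiplication maps is equivalent to the quasigroup property, completing the equivalence in both directions.
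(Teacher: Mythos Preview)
Your overall strategy is exactly the paper's: reduce to the no--zero--divisor condition, observe that any nonzero solution of $a*y=0$ (or $x*a=0$) must lie on the $\F_q$-line through $a^{-1}\xi$, and then plug back in to obtain a single scalar equation. The gap is in the computation you flag as ``the one genuinely computational point.'' Your claimed rewriting
\[
\Tr_{q^n/q}\Bigl(a\sum_i b_i (a^{-1}\xi)^{q^i}\Bigr)\;=\;\Tr_{q^n/q}\Bigl(a^{-1}\xi\sum_i b_i a^{q^i}\Bigr)
\]
does not follow from Frobenius invariance of the trace: applying $x\mapsto x^{q^{n-i}}$ to the $i$th summand moves $b_i$ to $b_i^{q^{n-i}}$ and the exponent of $a$ to $q^{n-i}-1$, not $q^i-1$. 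In fact what one gets, after the substitution $w=\xi a^{-1}$, is
\[
B(a,a^{-1}\xi)=\Tr_{q^n/q}\Bigl(\tfrac{\xi}{w}\sum_i b_i w^{q^i}\Bigr)=\Tr_{q^n/q}\bigl(M(w)/w\bigr),
\]
so the left-multiplication analysis yields the condition $\Tr_{q^n/q}(M(\xi a^{-1})/(\xi a^{-1}))\neq -1$, which is equivalent to the stated condition only after reparametrizing over all $a\in\F_{q^n}^*$.

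The clean fix (and what the paper actually does) is to analyze $x*a=0$ instead. Since $B(x,a)=\Tr_{q^n/q}\bigl(x\sum_i b_i a^{q^i}\bigr)$ is $\F_{q^n}$-linear in $x$ before the trace, plugging in $x=u\,\xi/a$ with $u\in\F_q$ gives directly
\[
B(\xi/a,\,a)=\Tr_{q^n/q}\Bigl(\tfrac{\xi}{a}\sum_i b_i a^{q^i}\Bigr)=\Tr_{q^n/q}\bigl(M(a)/a\bigr),
\]
with no Frobenius bookkeeping required. So your remark that the $x*a$ side is ``symmetric'' undersells it: that is the side on which the identity is immediate, and you should lead with it rather than treat it as an afterthought.
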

\begin{proof}
	$(\Rightarrow)$ Let $x*y$ be a presemifield multiplication. Assume to the contrary that there is $a\in \F_{q^n}^*$ such that 
	$$\Tr_{q^n/q}(M(a)/a)=-1.$$
	We consider the equation $x*a=0$. It has a solution $x$ if and only if there exists $u\in \F_q$ such that 
	\begin{align}
	\label{eq:xa=u} xa & = \xi u\quad \text{and}\\
	\label{eq:B(x,a)=-u} B(x,a) & = -u.
	\end{align}
	Plugging \eqref{eq:xa=u} into \eqref{eq:B(x,a)=-u}, we have $B(\xi u/a,a)=-u$, which means that
	$$u\Tr_{q^n/q}\left(\xi \sum_{i=0}^{n-1}b_i a^{q^i-1}\right)=-u,$$
	i.e.
	$$u\Tr_{q^n/q}(M(a)/a)=-u,$$
	which holds for any $u\in \F_q$ according to our assumption. Therefore, $x*a=0$ has a nonzero solution. It contradicts our assumption that $*$ defines a presemifield multiplication.
	
	$(\Leftarrow)$ It is easy to see that the left and right distributivity of the multiplication $*$ hold.	We only need to show that for any $a\neq 0$, $x*a=0$ if and only if $x=0$. This is achieved by reversing the first part of the proof. 
\end{proof}

Let $x*y$ be the multiplication defined in Theorem \ref{th:main}. Then it is straightforward to verify that the presemifield $(\F_{q^n},+,*)$ is isotopic to $(\F_{q^n},+,\star)$, where
\[x\star y := xy+B'(x,y)\]
and $B'(x,y)=\Tr_{q^n/q}(\xi\sum_{i=0}^{n-1}b_i x y^{q^i})$. Therefore, we can restrict ourselves to the switchings of finite field multiplications with  $\xi=1$.

For the switchings 
\[x\star y+ B(x,y)\xi\]
of a (pre)semifield multiplication $\star$,
it is difficulty to obtain explicit conditions on $B(x,y)$. The reason is that generally we can not explicitly write down the solution of $x\star a=\xi u$ as we did for \eqref{eq:xa=u}.

Let $\alpha$ be an element in $\F_{q^n}$ such that $\Tr_{q^n/q}(\alpha)=1$. To find $M(X)$ satisfying the condition in Theorem \ref{th:main}, we only need to consider the $q$-linearized polynomial $L(X):=M(X)+\alpha X\in\F_{q^n}[X]$ such that 
\begin{equation}\label{eq:main_L}
	\Tr_{q^n/q}(L(x)/x)\neq 0 \quad\text{for all } x\in\F_{q^n}^*.
\end{equation}

Obviously, when $L(X)=\beta X$, where $\Tr_{q^n/q}(\beta)\neq 0$, we have 
$\Tr_{q^n/q}(L(x)/x)\neq 0$ for every nonzero $x$. The question is whether there are other $L$'s. We will give several results concerning this question throughout Sections \ref{se:small n}
-- \ref{se:large n and curves}.

The proof of next proposition is also straightforward.
\begin{proposition}
	Let $L(X)=\sum_{i=0}^{n-1}a_i X^{q^i}\in\F_{q^n}[X]$. If $\Tr_{q^n/q}(L(x)/x)\neq 0$ for all $x\in \F_{q^n}^*$, then the mapping $x\mapsto L(x)$ is a permutation of $\Bbb F_{q^n}$.
\end{proposition}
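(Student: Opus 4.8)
The plan is to exploit the fact that a $q$-linearized polynomial induces an $\F_q$-linear endomorphism of $\F_{q^n}$, so that being a permutation is equivalent to being injective, and then to read injectivity directly off the trace hypothesis. Concretely, since $L(X)=\sum_{i=0}^{n-1}a_i X^{q^i}$ and the Frobenius $x\mapsto x^{q}$ is $\F_q$-linear on $\F_{q^n}$, the evaluation map $x\mapsto L(x)$ is an $\F_q$-linear map from the $n$-dimensional $\F_q$-vector space $\F_{q^n}$ to itself. For such a map, injectivity and surjectivity are equivalent (a linear endomorphism of a finite-dimensional vector space is bijective iff its kernel is trivial), so it suffices to prove that $L$ has trivial kernel.

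The key step is to show that $L(x)=0$ forces $x=0$. I would argue by contraposition: suppose there were some $x\in\F_{q^n}^*$ with $L(x)=0$. Then $L(x)/x=0$, whence
\[
\Tr_{q^n/q}(L(x)/x)=\Tr_{q^n/q}(0)=0,
\]
which directly contradicts the standing hypothesis that $\Tr_{q^n/q}(L(x)/x)\neq 0$ for every nonzero $x$. Hence no such $x$ exists, the kernel of $L$ is $\{0\}$, and $L$ is injective.

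Combining the two observations, $x\mapsto L(x)$ is an injective $\F_q$-linear self-map of the finite-dimensional space $\F_{q^n}$, and is therefore bijective, i.e.\ a permutation of $\F_{q^n}$. There is essentially no genuine obstacle here: the only point to state carefully is that the $q$-linearized form of $L$ guarantees $\F_q$-linearity (so that counting dimensions lets injectivity upgrade to bijectivity), after which the conclusion is immediate from the definition of the trace of $0$. The proposition is really just recording that the nonvanishing condition \eqref{eq:main_L}, being strictly stronger than $L(x)\neq 0$ for $x\neq 0$, in particular rules out a nontrivial kernel.
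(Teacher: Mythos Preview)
Your proof is correct and is precisely the straightforward argument the paper has in mind (the paper omits the proof entirely, simply noting it is immediate). The only content is that $L$ is $\F_q$-linear, so bijectivity reduces to triviality of the kernel, which is forced by the trace hypothesis since $L(x)=0$ with $x\ne 0$ would give $\Tr_{q^n/q}(L(x)/x)=\Tr_{q^n/q}(0)=0$.
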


We include several lemmas which will be used later to investigate the commutativity of presemifield multiplications.
\begin{lemma}\label{lm:commutative_1}
		Let $x*y:= xy+ B(x,y)$, where $B(x,y):= \Tr_{q^n/q}(\sum_{i=0}^{n-1}b_i x y^{q^i})$, $b_i\in\Bbb F_{q^n}$. Then $*$ is commutative if and only if $b_i\in\F_{q^{\gcd(i,n)}}$ for every $i=0,1,\dots,n-1$.
\end{lemma}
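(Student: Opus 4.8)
The plan is to reduce the commutativity of $*$ to the symmetry of the bilinear form $B$, and then to convert symmetry into algebraic relations among the coefficients $b_i$. Since $x*y=xy+B(x,y)$ and $y*x=yx+B(y,x)=xy+B(y,x)$, the operation $*$ is commutative if and only if $B(x,y)=B(y,x)$ for all $x,y\in\F_{q^n}$; so the whole lemma is really a statement about when the form $B(x,y)=\Tr_{q^n/q}\bigl(\sum_{i=0}^{n-1}b_i xy^{q^i}\bigr)$ is symmetric. A preliminary ingredient I would record is that the representation of a bilinear form in this shape is \emph{unique}: if $\Tr_{q^n/q}(\sum_i b_i xy^{q^i})=0$ for all $x,y$, then fixing $y$ and using nondegeneracy of the trace form in the $x$-variable gives $\sum_i b_i y^{q^i}=0$ for all $y$, and a nonzero $q$-linearized polynomial $\sum_{i=0}^{n-1}b_iY^{q^i}$ has at most $q^{n-1}$ roots, forcing all $b_i=0$. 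Thus two such representations coincide as forms iff their coefficient tuples coincide, and comparing $B(x,y)$ with $B(y,x)$ reduces to comparing coefficients.

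Next I would put $B(y,x)$ into the same normal form. Applying the Frobenius $z\mapsto z^{q^{n-i}}$ inside the trace (which leaves $\Tr_{q^n/q}$ invariant) to the $i$-th term gives
\[
\Tr_{q^n/q}\bigl(b_i\,y\,x^{q^i}\bigr)=\Tr_{q^n/q}\bigl(b_i^{q^{n-i}}\,x\,y^{q^{n-i}}\bigr),
\]
using $x^{q^iq^{n-i}}=x^{q^n}=x$. Reindexing by $j\equiv n-i\pmod n$ then yields $B(y,x)=\Tr_{q^n/q}\bigl(\sum_{j=0}^{n-1} b_{n-j}^{\,q^{j}}\,x\,y^{q^{j}}\bigr)$, where the subscripts of $b$ are read modulo $n$. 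By the uniqueness established above, symmetry of $B$ — hence commutativity of $*$ — is equivalent to the system of relations
\[
b_i=b_{n-i}^{\,q^{i}}\qquad(i=0,1,\dots,n-1),
\]
the index $i=0$ being automatic.

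The main obstacle is the final step: passing from these coupling relations $b_i=b_{n-i}^{q^i}$ to the stated per-index membership conditions $b_i\in\F_{q^{\gcd(i,n)}}$. Here I would analyze the orbit structure of the involution $i\leftrightarrow n-i$ together with the Frobenius exponents. Applying the relation twice returns $b_i=b_i^{q^n}=b_i$, so each pair $\{i,n-i\}$ is governed by a single relation, and the self-paired index $i=n/2$ (when $n$ is even) gives $b_{n/2}=b_{n/2}^{q^{n/2}}$, i.e. $b_{n/2}\in\F_{q^{n/2}}=\F_{q^{\gcd(n/2,n)}}$, matching the claim. This is exactly where the equivalence must be checked with care: for $i\neq n-i$ the single relation determines $b_{n-i}$ from an otherwise unconstrained $b_i$, and does not by itself drive $b_i$ into $\F_{q^{\gcd(i,n)}}$. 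Reconciling the derived relations with the asserted subfield memberships — equivalently, confirming that the two descriptions cut out bilinear spaces of the same dimension — is the crux of the argument, and is the step I would scrutinize most closely before declaring the proof complete.
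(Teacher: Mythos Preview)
Your derivation is correct, and your suspicion at the end is well founded: the relations $b_i=b_{n-i}^{\,q^i}$ (indices read mod $n$) that you obtain are \emph{not} equivalent to the per-index condition $b_i\in\F_{q^{\gcd(i,n)}}$ once $n\ge 3$, so the lemma as stated is false. The paper's own proof reaches $b_i=b_i^{q^i}$ via an erroneous trace manipulation: in rewriting $\Tr_{q^n/q}\bigl(\sum_i b_i\, y\, x^{q^i}\bigr)$ in the form $\Tr_{q^n/q}\bigl(x\sum_i c_i\, y^{q^i}\bigr)$ one must apply $z\mapsto z^{q^{n-i}}$ termwise and then reindex, which gives $c_i=b_{n-i}^{\,q^i}$, not $c_i=b_i^{\,q^i}$. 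So the step you flagged cannot be completed --- there is nothing to reconcile.

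A concrete counterexample in each direction: take $n=3$, $b_0=b_2=0$, $b_1=1$. Then every $b_i\in\F_{q^{\gcd(i,3)}}$, yet $B(x,y)=\Tr_{q^3/q}(xy^{q})$ is not symmetric (e.g.\ with $q=2$ and $\alpha$ a root of $X^3+X+1$ one checks $B(\alpha,\alpha^2)=1\neq 0=B(\alpha^2,\alpha)$). Conversely, for $n=3$ pick any $b_1\in\F_{q^3}\setminus\F_q$ and set $b_2=b_1^{q^2}$, $b_0=0$; then $b_i=b_{3-i}^{\,q^i}$ holds, so $B$ is symmetric, while $b_1\notin\F_q$. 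Your dimension-count instinct settles it cleanly: symmetric $\F_q$-bilinear forms on $\F_{q^n}$ form a space of $\F_q$-dimension $\binom{n+1}{2}$, which matches the solution space of $\{b_i=b_{n-i}^{\,q^i}\}$ but not that of $\{b_i\in\F_{q^{\gcd(i,n)}}\}$ (already $6$ versus $5$ when $n=3$). The correct statement of the lemma is that $*$ is commutative if and only if $b_i=b_{n-i}^{\,q^i}$ for all $i$.
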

\begin{proof}
	Clearly, $x*y=y*x$ if and only if $B(x,y)=B(y,x)$, i.e.
	$$\Tr_{q^n/q}\left(\sum_{i=0}^{n-1}b_i x y^{q^i}\right)=\Tr_{q^n/q}\left(\sum_{i=0}^{n-1}b_i y x^{q^i}\right),$$
	which means that
	$$\Tr_{q^n/q}\left(x\sum_{i=0}^{n-1}(b_i-b_i^{q^i})y^{q^i}\right)=0$$
	for every $x,y\in\F_{q^n}$. Therefore $*$ is commutative if and only if  $b_i=b_i^{q^i}$ for every $i$.
\end{proof}

It is possible that a non-commutative presemifield $\bbP$ is isotopic to a commutative presemifield. We can use the next criterion given by Bierbrauer \cite{bierbrauer_semifields_2012}, as a generalization of Ganley's criterion  \cite{ganley_polarities_1972}, to test whether this happens. 

\begin{lemma}\label{lm:commutative_2}
	A presemifield $(\bbP,+,*)$ is isotopic to a commutative semifield if and only if there is some nonzero $v$ such that $A(v*x)*y=A(v*y)*x$, where $A: \F_{q^n}\to\F_{q^n}$ is defined by $A(x)*1=x$.
\end{lemma}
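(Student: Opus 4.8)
The plan is to recognize that the displayed identity is nothing but the commutativity of a suitable auxiliary multiplication, and then to match this up with the definition of isotopy. For a fixed nonzero $v$, I would introduce the bilinear operation $x\diamond y:=A(v*x)*y$ on $\F_{q^n}$. Here $A$ is well defined, bijective and $\F_q$-linear, being the inverse of the map $x\mapsto x*1$ (which is linear by distributivity and injective because $*$ has no zero divisors and $1\neq0$); likewise $x\mapsto v*x$ is bijective and linear for $v\neq0$. Hence $M_v:x\mapsto A(v*x)$ is a bijective linear map, $\diamond$ is bilinear with no zero divisors, so $(\F_{q^n},+,\diamond)$ is a presemifield, and the identity $x\diamond y=M_v(x)*y$ exhibits $(M_v,\mathrm{id},\mathrm{id})$ as an isotopism between $\diamond$ and $*$. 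The crux of the whole argument is the trivial-looking observation that the condition $A(v*x)*y=A(v*y)*x$ is exactly the statement $x\diamond y=y\diamond x$.

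For the ($\Leftarrow$) direction I would then argue: if such a $v$ exists, $\diamond$ is a commutative presemifield isotopic to $*$; applying the passage \eqref{eq:presemi->semi} to $\diamond$ produces an isotopic semifield which is again commutative (as remarked after \eqref{eq:presemi->semi}, since for a commutative presemifield the normalizing map $B_1$ is trivial), so $*$ is isotopic to a commutative semifield.

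For the ($\Rightarrow$) direction, suppose $*$ is isotopic to a commutative semifield $(\F_{q^n},+,\circ)$. By symmetry of isotopy there exist bijective linear $L,M,N$ with $L(x\circ y)=M(x)*N(y)$, and commutativity of $\circ$ yields $M(x)*N(y)=M(y)*N(x)$ for all $x,y$. Substituting $s=N(x)$, $t=N(y)$ and writing $P:=MN^{-1}$ (bijective linear) converts this to $P(s)*t=P(t)*s$ for all $s,t$. The decisive step is to specialize $t=1$: since $A(z)*1=z$ gives $P(s)*1=A^{-1}(P(s))$, I obtain $A^{-1}(P(s))=P(1)*s$, that is $P(s)=A(P(1)*s)$. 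Setting $v:=P(1)\neq0$ and substituting this expression for $P$ back into $P(s)*t=P(t)*s$ recovers precisely $A(v*s)*t=A(v*t)*s$, as required.

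The step I expect to be the main obstacle is this reduction in the forward direction from the three independent maps $(L,M,N)$ of a general isotopism down to the one-parameter normalized family $x\mapsto A(v*x)$: the specialization $t=1$ is exactly what forces $P$ into this family and pins down the single element $v=P(1)$. A secondary point needing care is the presemifield-versus-semifield distinction in the ($\Leftarrow$) direction, which is handled cleanly by noting that \eqref{eq:presemi->semi} preserves commutativity.
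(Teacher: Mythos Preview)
The paper does not prove this lemma at all: it is quoted as a criterion of Bierbrauer (generalizing Ganley's criterion) with a citation to \cite{bierbrauer_semifields_2012}, and no argument is given in the text. Your proposal therefore supplies what the paper omits, and it is correct. The key moves---introducing $x\diamond y:=A(v*x)*y$ to recast the displayed identity as commutativity of an isotopic presemifield, and in the forward direction collapsing the three isotopy maps to the single map $P=MN^{-1}$ and then specializing $t=1$ to force $P(s)=A(P(1)*s)$---are exactly right. One small clarification worth making explicit: since $(\bbP,+,*)$ is only a presemifield, the element ``$1$'' in the definition of $A$ is not a multiplicative identity for $*$ but simply the field element $1\in\F_{q^n}$; your specialization $t=1$ uses only that $1\ne 0$, so nothing is lost.
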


Given an arbitrary presemifield multiplication, it is not easy to get the explicit expression for $A(x)$. However, we can do it for the switchings of multiplications of finite fields.
\begin{lemma}\label{lm:A(x)}
	Let $x*y:= xy+ B(x,y)$ be a switching of $\F_{q^n}$, where $B(x,y):= \Tr_{q^n/q}(\sum_{i=0}^{n-1}b_i x y^{q^i})$, $b_i\in\Bbb F_{q^n}$. Let $A: \F_{q^n}\to\F_{q^n}$ be such that  $A(x)*1=x$ for every $x\in \F_{q^n}$. Then
	\begin{equation}\label{eq:A(x)}
		A(x)=x+\Tr_{q^n/q}\left(\frac{-tx}{1+\Tr_{q^n/q}(t)}\right),
	\end{equation}
	where $t=\sum_{i=0}^{n-1}b_i$.
\end{lemma}
\begin{proof}
	First, we have
	\begin{align*}
		u*1 &= u+B(u,1)\\
			&= u+\Tr_{q^n/q}\left(\sum_{i=0}^{n-1}b_i u\right)\\
			&= u+\Tr_{q^n/q}(tu).
	\end{align*}
	It is worth noting that $1*1=1+\Tr_{q^n/q}(t)\neq 0$. Let $s:=-t/(1+\Tr_{q^n/q}(t))$. Replacing $u$ by the expression in \eqref{eq:A(x)}, we have
	\begin{align*}
		A(x)*1  &= x+ \Tr_{q^n/q}(sx)+\Tr_{q^n/q}\bigl[tx+t \Tr_{q^n/q}(sx)\bigr]\\
				&= x+ \Tr_{q^n/q}\bigl[s(1+\Tr_{q^n/q}(t))x+tx\bigr]\\
				&= x. \qedhere
	\end{align*}
\end{proof}

\section{Switchings of $\F_{q^n}$ for small $n$}\label{se:small n}

In this section, we investigate the switchings of finite fields $(\F_{q^n},+,\cdot)$ where $n\le 4$.
\begin{lemma}\label{lm:n=2}
	Let $L(X)=a_1 X^q + a_0 X\in \F_{q^2}[X]$. Then the polynomial
	\begin{equation*}
		f(X)=\Tr_{q^2/q}(L(X)/X)
	\end{equation*}
	has no root in $\F_{q^2}^*$ if and only if the equation $x^{q-1} =y$ has no solution $x\in\Bbb F_{q^2}^*$ for every $y\in\Bbb F_{q^2}$ satisfying
	\begin{equation}\label{eq:main n=2}
		a_1 y^2+\Tr_{q^2/q}(a_0)y+a_1^{q}=0.
	\end{equation}
\end{lemma}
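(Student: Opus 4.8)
The plan is to simplify $f(X)=\Tr_{q^2/q}(L(X)/X)$ by a direct computation and then reinterpret its vanishing as a condition on the image of the map $x\mapsto x^{q-1}$. First I would write $L(X)/X=a_1X^{q-1}+a_0$ and use $\Tr_{q^2/q}(z)=z+z^q$ to obtain, for $x\in\F_{q^2}^*$,
\[
f(x)=a_1x^{q-1}+a_0+a_1^q x^{q(q-1)}+a_0^q.
\]
The key simplification is that every $x\in\F_{q^2}^*$ satisfies $x^{q^2}=x$, hence $x^{q(q-1)}=x^{q^2-q}=x^{1-q}=(x^{q-1})^{-1}$. Setting $y:=x^{q-1}$, which is nonzero because $x\neq 0$, this turns $f(x)$ into
\[
f(x)=a_1y+\Tr_{q^2/q}(a_0)+a_1^q y^{-1}.
\]

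Next I would clear the denominator. Since $y\neq 0$, the equation $f(x)=0$ is equivalent to
\[
a_1y^2+\Tr_{q^2/q}(a_0)\,y+a_1^q=0,
\]
which is precisely \eqref{eq:main n=2}. Thus an element $x\in\F_{q^2}^*$ is a root of $f$ if and only if $y=x^{q-1}$ is a root of the quadratic \eqref{eq:main n=2}. Phrased differently, the roots of $f$ in $\F_{q^2}^*$ are exactly the preimages, under $x\mapsto x^{q-1}$, of those roots of \eqref{eq:main n=2} that lie in the image of this map.

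Finally I would read off the stated equivalence by contraposition: $f$ has a root in $\F_{q^2}^*$ precisely when some $y$ satisfying \eqref{eq:main n=2} is of the form $x^{q-1}$ for some $x\in\F_{q^2}^*$; equivalently, $f$ has no root in $\F_{q^2}^*$ if and only if for every $y$ satisfying \eqref{eq:main n=2}, the equation $x^{q-1}=y$ has no solution $x\in\F_{q^2}^*$. I expect no serious obstacle, since the argument is essentially one computation. The only point requiring care is the bookkeeping around $y=0$: because $y=x^{q-1}$ is automatically nonzero, multiplying $f(x)=0$ by $y$ introduces no spurious solutions, and conversely $y=0$ is never a value of $x\mapsto x^{q-1}$, so discarding it is harmless.
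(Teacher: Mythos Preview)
Your proof is correct and follows essentially the same approach as the paper: both substitute $y=x^{q-1}$, use the relation $x^{q(q-1)}=y^{-1}$ (equivalently $y^{q+1}=1$, i.e.\ $y^q=y^{-1}$), and clear the denominator to arrive at the quadratic \eqref{eq:main n=2}. The only cosmetic difference is that the paper factors $f(x)$ as $y^q\bigl(a_1y^2+\Tr_{q^2/q}(a_0)y+a_1^q\bigr)$ rather than multiplying through by $y$, which amounts to the same thing since $y\cdot y^q=1$.
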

\begin{proof}
	Let $y:= x^{q-1}$, where $x\in\F_{q^2}^*$. Then
	\begin{align*}
		\Tr_{q^2/q}(L(x)/x) &= \Tr_{q^2/q}(a_1 x^{q-1}+a_0)\\
		&= \Tr_{q^2/q}(a_1 y+a_0)\\
		&= a_1^q y^q+a_1 y+\Tr_{q^2/q}(a_0)\\
		&= y^q(a_1 y^2+\Tr_{q^2/q}(a_0)y+a_1^q)
	\end{align*}
	since $y^{q+1}=1$. Therefore, $f$ has a nonzero root if and only if there exists a $(q-1)$-th power in $\F_{q^2}^*$ satisfying \eqref{eq:main n=2}.
\end{proof}


\begin{theorem}\label{th:n=2}
	Let $L(X)=a_1 X^q + a_0 X\in \F_{q^2}[X]$. Then 
	\begin{equation}\label{eq:main n=2_f}
		f(X)=\Tr_{q^2/q}(L(X)/X)
	\end{equation}
	has no root in $\F_{q^2}^*$	if and only if $g(X)=X^2+\Tr_{q^2/q}(a_0)X+a_1^{q+1}\in\F_q[X]$ has two distinct roots in $\F_q$.
\end{theorem}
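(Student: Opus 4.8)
The plan is to combine Lemma \ref{lm:n=2} with a normalizing substitution that turns equation \eqref{eq:main n=2} into $g$. Throughout, write $s := \Tr_{q^2/q}(a_0) \in \F_q$ and $N := a_1^{q+1} \in \F_q$, so that $g(X) = X^2 + sX + N$. I would first dispose of the degenerate case $a_1 = 0$ by direct computation: there $f(X) = \Tr_{q^2/q}(a_0) = s$ is constant, so it has no root in $\F_{q^2}^*$ exactly when $s \neq 0$, while $g(X) = X(X+s)$ has two distinct roots in $\F_q$ exactly when $s \neq 0$; the two conditions agree. So I may assume $a_1 \neq 0$, whence $N \neq 0$ and $0$ is a root of neither $g$ nor \eqref{eq:main n=2}.

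The key reformulation is to identify the $(q-1)$-th powers appearing in Lemma \ref{lm:n=2} with the norm-one subgroup: the image of $x \mapsto x^{q-1}$ and the kernel $\{y : y^{q+1} = 1\}$ of the norm map $\F_{q^2}^* \to \F_q^*$ are both the unique subgroup of order $q+1$ in the cyclic group $\F_{q^2}^*$, hence coincide. With this, Lemma \ref{lm:n=2} reads: $f$ has no root in $\F_{q^2}^*$ if and only if \emph{no} root $y$ of \eqref{eq:main n=2} satisfies $y^{q+1} = 1$.

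Next I would apply the substitution $z := a_1 y$, which carries \eqref{eq:main n=2} precisely into $g(z) = z^2 + sz + N = 0$; thus the roots of \eqref{eq:main n=2} are exactly $z/a_1$ as $z$ runs over the roots of $g$, and such a root is norm-one if and only if $z^{q+1} = N$. It then remains to examine the factorization type of $g$ over $\F_q$. If $g$ has two distinct roots $z_1, z_2 \in \F_q$, then $z_i^{q+1} = z_i^2$ and $z_1 z_2 = N$, so $z_i^2 = N$ would force $z_1 = z_2$; hence no root of \eqref{eq:main n=2} is norm-one and $f$ has no root. If instead $g$ has a repeated root $z_0 \in \F_q$, then $z_0^2 = N$ gives $z_0^{q+1} = N$, and if $g$ is irreducible over $\F_q$ with conjugate roots $z, z^q$, then $z^{q+1} = z \cdot z^q = N$; in both of these cases some (indeed every) root of \eqref{eq:main n=2} is norm-one, so $f$ has a root. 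Combining the three mutually exclusive cases yields that $f$ has no root if and only if $g$ has two distinct roots in $\F_q$.

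The main obstacle I anticipate is conceptual rather than computational: recognizing the normalizing substitution $z = a_1 y$ and the identification of $(q-1)$-th powers with the norm-one subgroup. Once these are in place, everything reduces to elementary Vieta bookkeeping. The only point requiring a little care is the characteristic-two situation, where a repeated root of $g$ forces $s = 0$ and $N = z_0^2$; the computation $z_0^{q+1} = z_0^2 = N$ still goes through, so the argument is uniform in all characteristics.
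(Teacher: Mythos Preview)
Your proof is correct and follows the same overall strategy as the paper: dispose of $a_1=0$ directly, then invoke Lemma~\ref{lm:n=2} and the substitution $z=a_1y$ to reduce to the factorization type of $g$ over $\F_q$. The execution, however, is cleaner than the paper's. The paper proves each implication by a case split on the parity of $q$, computing with the discriminant $\Tr_{q^2/q}(a_0)^2-4a_1^{q+1}$ when $q$ is odd and handling $q$ even separately. Your argument replaces all of this with Vieta's relation $z_1z_2=N$: for two distinct roots in $\F_q$ this instantly rules out $z_i^2=N$; for a repeated root or an irreducible $g$ it instantly forces $z^{q+1}=N$. This works uniformly in all characteristics (your final remark about characteristic two is in fact unnecessary, since the Vieta argument never uses the discriminant). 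The identification of the image of $x\mapsto x^{q-1}$ with the norm-one subgroup is also a nice conceptual touch that the paper leaves implicit.
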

\begin{proof}
	If $a_1=0$, then $f(X)=\Tr_{q^2/q}(a_0)$ and $g(X)=X^2+\Tr_{q^2/q}(a_0)X$. It is clear that $f$ has no nonzero roots if and only if $g$ has two distinct roots.
	
	In the rest of the proof, we assume that $a_1\neq 0$. 
	
	$(\Leftarrow)$ 	Let $a_1 y\in \F_q$ ($y\in\F_{q^2}$) be a root of $g$. By Lemma \ref{lm:n=2}, it suffices to show that $y^{q+1}\neq 1$.
	
	\textbf{Case 1.} Assume that  $q$ is even. Since $g$ has two distinct roots, we have $\Tr_{q^2/q}(a_0)\neq 0$. Since
	$$(a_1y)^{q+1}=(a_1y)^2=\Tr_{q^2/q}(a_0)a_1y+a_1^{q+1},$$
	we have
	$$y^{q+1}=1+\frac{\Tr_{q^2/q}(a_0)y}{a_1^q}\neq 1.$$
	
	\textbf{Case 2.} Assume that $q$ is odd. We have $y=\frac{1}{2a_1}(-\Tr_{q^2/q}(a_0)+d)$, where $d\in\F_q^*$ and $d^2=\Tr_{q^2/q}(a_0)^2-4a_1^{q+1}$. Suppose to the contrary that $y^{q+1}=1$. It follows that
	$$(-\Tr_{q^2/q}(a_0)+d)^{q+1}=4a_1^{q+1},$$
	which means
	$$\Tr_{q^2/q}(a_0)^2+d^2-2d\Tr_{q^2/q}(a_0)=4a_1^{q+1}.$$
	Hence 
	$$2d^2-2d\Tr_{q^2/q}(a_0)=0.$$
	Therefore $d=\Tr_{q^2/q}(a_0)$. But then $d^2=\Tr_{q^2/q}(a_0)^2\neq \Tr_{q^2/q}(a_0)^2-4a_1^{q+1}$,  which is a contradiction.
	
	$(\Rightarrow)$ We first show that $g$ is reducible in $\F_q[x]$. Otherwise, let $a_1y\in\F_{q^2}\setminus \F_q$ be a root of $g$. Then $(a_1y)^{q+1}=a_1^{q+1}$, thus $y^{q+1}=1$. By Lemma \ref{lm:n=2}, $f$ has nonzero roots.
	
	It remains to show that $\Tr_{q^2/q}(a_0)^2-4a_1^{q+1}\neq 0$. Assume to the contrary that $\Tr_{q^2/q}(a_0)^2-4a_1^{q+1}=0$.
	
	\textbf{Case 1.} Assume that $q$ is even. It follows that $\Tr_{q^2/q}(a_0)=0$. Write $a_1=x^2$, where $x\in \F_{q^2}$, and let $y=x^{q-1}$. Then $a_1y$ is a root of $g$, which leads to a contradiction.
	
	\textbf{Case 2.} Assume that $q$ is odd. Then $a_1y=-\Tr_{q^2/q}(a_0)/2$ is a root of $g$, and 
	$$y^{q+1}=\frac{\Tr_{q^2/q}(a_0)^2}{4a_1^{q+1}}=1,$$
	which is impossible by Lemma \ref{lm:n=2}.
\end{proof}

\begin{remark}\rm
	When $n=2$, if there is some $L(X)$ such that \eqref{eq:main n=2_f} has no root in $\F_{q^2}^*$, then we can define a presemifield multiplication $*$ over $\F_{q^2}$ via Theorem \ref{th:main}. 
Let $\bbS=(\F_{q^2},+,\star)$ be a semifield which is isotopic to $(\F_{q^2},+,*)$. We may assume that $\star$ is defined by \eqref{eq:presemi->semi} and hence $\bbS$ has identity $1$. There are $a_{ij}\in\F_{q^2}$ such that $x*y=\sum_{i,j}a_{ij}x^{q^i}y^{q^j}$ for all $x,y\in\F_{q^2}$. Thus 
there are $b_{ij}\in\F_{q^2}$ such that $x\star y=\sum_{i,j}b_{ij}x^{q^i}y^{q^j}$ for all $x,y\in\F_{q^2}$. It follows that 
the center of $\bbS$ contains $\F_q$. (For $x\in\F_q$ and $y\in\F_{q^2}$, we have $x\star y=x(1\star y)=xy$ and $y\star x=x(y\star 1)=xy$. This implies that $\F_q$ is contained in both the commutative center and the nucleus of $\bbS$.) 
Due to the classification of two-dimensional finite semifields by Dickson \cite{dickson_commutative_1906}, $\bbS$ is isotopic to a finite field.
\end{remark}
\begin{theorem}\label{th:n=4}
	Let $q$ be a power of odd prime and let $L(X)=a_1 X^{q^2} + a_0 X\in \F_{q^4}[X]$ with $a_1\neq 0$. Then $\Tr_{q^4/q}(L(X)/X)$ has no root in $\F_{q^4}^*$ if and only if $a_1^{q^2+1}$ is a square in $\Bbb F_q^*$ and $\Tr_{q^4/q}(a_0)=0$.
\end{theorem}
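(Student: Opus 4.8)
The plan is to descend through the tower $\F_{q^4}\supset\F_{q^2}\supset\F_q$ so as to reduce to the quadratic case already settled in Theorem \ref{th:n=2}. Put $y:=x^{q^2-1}$; as $x$ runs over $\F_{q^4}^*$, $y$ runs over the norm-one subgroup $U=\{y:y^{q^2+1}=1\}$, and since $a_1^{q^2}y^{-1}=(a_1y)^{q^2}$ one finds
\[
\Tr_{q^4/q}(L(x)/x)=\Tr_{q^4/q}(a_1y)+\Tr_{q^4/q}(a_0)=\Tr_{q^2/q}(w)+c,
\]
where $w:=a_1y+a_1^{q^2}y^{-1}=\Tr_{q^4/q^2}(a_1y)\in\F_{q^2}$, $c:=\Tr_{q^4/q}(a_0)\in\F_q$ and $N:=a_1^{q^2+1}\in\F_{q^2}$. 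First I would determine the range $R$ of $w$ as $y$ ranges over $U$: writing $z=a_1y$ gives $z^{q^2+1}=N$ and $w=z+z^{q^2}$, so $z$ is a root of $T^2-wT+N$ over $\F_{q^2}$, and tracking when such a root satisfies $z^{q^2+1}=N$ shows
\[
R=\{w\in\F_{q^2}:\ w^2-4N\ \text{is a non-square or}\ 0\ \text{in}\ \F_{q^2}\}
\]
(equivalently, this is Theorem \ref{th:n=2} applied to $\F_{q^4}/\F_{q^2}$). Thus \emph{$\Tr_{q^4/q}(L(X)/X)$ has no root in $\F_{q^4}^*$ if and only if the $\F_q$-line $\Lambda:=\{w\in\F_{q^2}:\Tr_{q^2/q}(w)=-c\}$ is disjoint from $R$.}

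Fix $\gamma$ with $\gamma^2=\mu$ a non-square of $\F_q$, so that $\F_{q^2}=\F_q(\gamma)$ and $\Tr_{q^2/q}(a+b\gamma)=2a$. For the ``if'' direction, assume $N\in\F_q^*$ is a square and $c=0$; then $\Lambda=\F_q\gamma$ and, for $w=b\gamma$, $w^2-4N=\mu b^2-4N\in\F_q$. The elementary fact that every nonzero element of $\F_q$ is a square in $\F_{q^2}$ forces $w\in R$ only if $\mu b^2-4N=0$, i.e. $b^2=4N/\mu$; but $4N/\mu$ is a non-square of $\F_q$, so no such $b$ exists and $\Lambda\cap R=\varnothing$. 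Hence there is no root.

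For the converse I would prove the contrapositive by exhibiting a point of $\Lambda\cap R$ whenever the conditions fail. Writing $w=a_*+b\gamma$ with $a_*:=-c/2$, and using that $w\in R$ iff the norm $\mathrm{N}_{q^2/q}(w^2-4N)=(w^2-4N)(w^2-4N)^q$ is a non-square or $0$ in $\F_q$, the disjointness $\Lambda\cap R=\varnothing$ amounts to the quartic
\[
Q(b):=\mathrm{N}_{q^2/q}\big((a_*+b\gamma)^2-4N\big)\in\F_q[b]
\]
being a nonzero square for every $b\in\F_q$. When $a_*=0$ or $N\in\F_q$ (so the coefficient of $b$ in $Q$ vanishes), $Q(b)=\widetilde Q(b^2)$ is even, and with $\chi$ the quadratic character of $\F_q$ one gets $\sum_b\chi(Q(b))=\sum_s\chi(\widetilde Q(s))+\sum_s\chi(s\widetilde Q(s))=-1+\sum_s\chi\big(s\widetilde Q(s)\big)$; the Weil bound $|\sum_s\chi(\text{cubic})|\le 2\sqrt q$ then contradicts the value $q$ demanded by ``always a nonzero square'' for \emph{every} $q$, the only escape being a repeated root of $\widetilde Q$, which occurs exactly when $N\in\F_q$ is a square and $c=0$ (the boundary cases $\widetilde Q(0)=0$ instead supply a root directly). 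When $a_*\neq0$ and $N\notin\F_q$, the polynomial $Q$ has zero cubic but nonzero linear coefficient, so it is not a constant multiple of a square; if $Q$ has a root $b_0\in\F_q$ then $w_0:=a_*+b_0\gamma$ satisfies $w_0^2=4N$, whence $w_0\in\Lambda\cap R$ at once, and otherwise $Q$ must be squarefree, so the quartic bound $|\sum_b\chi(Q(b))|\le 3\sqrt q$ again contradicts $\sum_b\chi(Q(b))=q$.

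I expect the delicate point to be precisely this last situation, $a_*\neq0$ and $N\notin\F_q$ with $Q$ squarefree: the quartic estimate defeats ``always a nonzero square'' only for $q>9$, so the few fields $q\in\{3,5,7,9\}$ must be dispatched by a direct check (or by a sharper, structure-aware estimate). The routine part is the coordinate computation of $Q$ and the bookkeeping of its factorization types; once the description of $R$ and the reduction to the line $\Lambda$ are in hand, everything else is character-sum estimation.
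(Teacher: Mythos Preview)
Your proposal is correct and follows essentially the same route as the paper: both substitute $y=x^{q^2-1}$, rewrite $\Tr_{q^4/q}(L(x)/x)$ in terms of $z=a_1y+a_1^{q^2}y^{-1}$, invoke the $n=2$ result (Theorem~\ref{th:n=2}) to reduce the question to whether a discriminant $w^2-4a_1^{q^2+1}$ is always a nonzero square in $\F_{q^2}$ along an $\F_q$-line, and then settle the ``only if'' direction by a Weil-type character-sum estimate together with a finite check for small $q$. The one noteworthy difference is organizational: you pass to $Q(b)=\mathrm{N}_{q^2/q}(w^2-4N)$ and split on whether $Q$ is even in $b$, which lets the cubic Weil bound dispose of the even case for every $q$ and leaves only $q\in\{3,5,7,9\}$ to verify in the remaining case; the paper instead parametrizes the square root of the discriminant directly, obtains a single quartic, and ends up needing a computer check for $q\le 19$.
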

\begin{proof}
Let $b=\Tr_{q^4/q}(a_0)$. Let $x\in\F_{q^4}^*$ and
	set $y:= x^{q^2-1}$ and $z:= a_1 y+a_1^{q^2}/{y}$. Then
	\begin{align}
		\Tr_{q^4/q}(L(x)/x) &= \Tr_{q^4/q}(a_1x^{q^2-1}+a_0) \nonumber\\
						    &= a_1y+a_1^qy^q+a_1^{q^2}/{y}+a_1^{q^3}/{y^q}+\Tr_{q^4/q}(a_0)\nonumber\\
						    &= z+z^q+b. \nonumber\\
						    &= \left(z+\frac{b}{2}\right)^q + \left(z+\frac{b}{2}\right).\label{eq:n=4_main}
	\end{align}
	Thus $\Tr_{q^4/q}(L(x)/x)=0$  if and only if $(z+\frac{b}{2})^{q-1}=-1$ or $0$, i.e.,\ $z=t-\frac{b}{2}$ for some $t\in T:= \{t\in\F_{q^4}:t^{q}=-t\}\subset\Bbb F_{q^2}$. Since $z=a_1 y+a_1^{q^2}/{y}$, we see that $z=t-\frac{b}{2}$ if and only if
\begin{equation}\label{qd-in-y}
 a_1 y^2+\left(\frac{b}{2}-t\right)y+a_1^{q^2}=0.
\end{equation}
By the proof of Theorem~\ref{th:n=2}, we see that $\{x\in\F_{q^4}^*:y=x^{q^2-1}\ \text{satisfies \eqref{qd-in-y}}\}\ne \emptyset$ if and only if 
	\begin{equation*}
		g(X):=X^2+\left(\frac{b}{2}-t\right)X+a_1^{q^2+1}
	\end{equation*}
	has two distinct roots in $\F_{q^2}$. Therefore, to sum up, $\Tr_{q^4/q}(L(x)/x)$ has no root in $\F_{q^4}^*$ if and only if $g(X)$ has two distinct roots in $\F_{q^2}$ for every $t\in T$. We now proceed to prove the ``if'' and the ``only if'' portions of the theorem separately.
	
	$(\Leftarrow)$ Assume $b=0$ and $a_1^{q^2+1}$ is a square in $\Bbb F_q^*$. Then $a_1^{q^2+1}\ne t^2$ for all $t\in T$. Hence 
	$$\Delta:=\left(\frac{b}{2}-t\right)^2-4a_1^{q^2+1}=t^2-4a_1^{q^2+1}\in \F_{q}^*.$$
	It follows that  $g$ has two distinct roots in $\Bbb F_{q^2}$.
	
	$(\Rightarrow)$ Assume that $\Tr_{q^4/q}(L(X)/X)$ has no root in $\F_{q^4}^*$. We want to show
	\begin{enumerate}
		\item[\textbf{R1.}] $b=0$, and
		\item[\textbf{R2.}] $a_1^{q^2+1}$ is a square in $\F_q^*$. Equivalently, $a_1^{q^2+1}$ is in $\F_q$ and there is no $t\in T$ such that $t^2=4 a_1^{q^2+1}$.
	\end{enumerate}
	
	Now we assume that $\Delta=\left(\frac{b}{2}-t\right)^2-4a_1^{q^2+1}\neq 0$ always has a square root in $\F_{q^2}$, for every $t\in T$.  Choose an element $\xi$ of $\F_{q^2}\setminus \F_q$, such that $\xi^{q-1}=-1$. Then every element of $\F_{q^2}$ can be written as $z+w\xi$, where $z$, $w\in \F_q$,  and $T=\{x\xi: x \in \F_q\}$. We write $a_1^{q^2+1}=A_1+A_2\xi$. As $\Delta$ is always a square in $\F_{q^2}^*$, the equation
	\begin{equation}\label{eq:two_exponents}
		(z+w\xi)^2=(x\xi-b/2)^2-(A_1+A_2\xi)
	\end{equation}
	in $(z,w)$ has solutions  for every $x\in\F_q$. Expanding \eqref{eq:two_exponents}, we have
	\begin{eqnarray}
		z^2+w^2\alpha &=& x^2\alpha +b^2/4 -A_1, \label{eq:component_1}\\
		2wz &=& -xb-A_2,\label{eq:component_2}
	\end{eqnarray}
	where $\alpha=\xi^2\in\F_q$. 
	
	If we can show that $b=0$ and $A_2=0$, then  the proof is complete (\textbf{R2} can be easily derived from the condition that $\Delta\neq 0$). Suppose to the contrary that at least one of $b$ and $A_2$ is not $0$. Then there exists at most one $x=x_0\in\F_q$ such that $w=0$ by \eqref{eq:component_2}. Now assume that $w\neq 0$. From \eqref{eq:component_2} we have
	\[z=-\frac{xb+A_2}{2w}.\]
	Plugging it into \eqref{eq:component_1}, we get
	\[\frac{(xb+A_2)^2}{4w^2}+w^2\alpha=x^2\alpha + \frac{b^2}{4}-A_1, \]
	i.e.,
	\[\alpha(w^2)^2-(x^2\alpha+\frac{b^2}{4}-A_1)w^2+\frac{(xb+A_2)^2}{4}=0.\]
	For every given $x\in \F_q\setminus\{x_0\}$, this equation always has a solution $w$ in $\F_q$. It follows that
	\[f(x)=(x^2\alpha + \frac{b^2}{4}-A_1)^2-\alpha(xb+A_2)^2 \]
	is always a square in $\F_q$. Let $\psi$ be the multiplicative character of $\F_q$ of order $2$, and for convenience we set $\psi(0)=0$. Then we have
	\begin{equation}\label{eq:all_square_in_fq}
		\sum_{c\in\F_q}\psi(f(c))\ge q-6.
	\end{equation} 
	On the other hand, by Theorem 5.41 in \cite{lidl_finite_1997} (it is routine to verify all the conditions for $f(x)$, because $(b,A_2)\neq (0,0)$ and  $(A_1,A_2)\neq (0,0)$), we have
	\[\sum_{c\in\F_q} \psi(f(c))\le 3\sqrt{q}.\]
	Therefore $q-6\le  3\sqrt{q}$, which means that $q=3,5,7,9,11,13,17,19$. We can use MAGMA \cite{Magma} to show that $f(x)$ is not always a square for $x\in \F_q\setminus\{x_0\}$ when $q\le 19$. Hence $b=A_2=0$, which completes the proof.
\end{proof}

\begin{theorem}\label{th:n=4 commutative}
	Let $q$ be a power of an odd prime. Let $a_1\in\Bbb F_{q^4}^*$ such that $a_1^{q^2+1}$ is a square in $\Bbb F_q^*$ and let $\tilde{a}_0$ be an element in $\F_{q^4}$ such that $\Tr_{q^4/q}(\tilde{a}_0)=-1$. Define
	\[x*y= xy + \Tr_{q^4/q}(a_1xy^{q^2}+\tilde{a}_0xy).\]
	According to Theorem \ref{th:main} and Theorem \ref{th:n=4}, $(\F_{q^4}, +, *)$ forms a presemifield. Furthermore, it is isotopic to a commutative semifield.
\end{theorem}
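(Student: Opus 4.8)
The plan is to exhibit an explicit isotope of $*$ that is commutative, rather than to verify the abstract criterion of Lemma~\ref{lm:commutative_2} directly. First I would record what the hypotheses give us for free: with $b_2=a_1$, $b_0=\tilde a_0$ and $b_1=b_3=0$, the polynomial $M(X)=a_1X^{q^2}+\tilde a_0X$ and the associated $L(X)=M(X)+\alpha X$ fall under Theorem~\ref{th:n=4}, and since $a_1^{q^2+1}$ is a square in $\F_q^*$ and $\Tr_{q^4/q}(\tilde a_0)=-1$, Theorems~\ref{th:main} and~\ref{th:n=4} already guarantee that $*$ is a presemifield. The obstruction to commutativity is easy to isolate: a short trace manipulation (the one behind Lemma~\ref{lm:commutative_1}) gives $x*y-y*x=\Tr_{q^4/q}\big((a_1-a_1^{q^2})xy^{q^2}\big)$, so $*$ is commutative exactly when $a_1\in\F_{q^2}$, which we cannot assume.

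The idea is to kill this antisymmetric term by scaling the left argument. Define a new multiplication $x\star y:=(\lambda x)*y=\lambda xy+\Tr_{q^4/q}\big(\lambda a_1\,xy^{q^2}+\lambda\tilde a_0\,xy\big)$ for a suitable $\lambda\in\F_{q^4}^*$. Then $(\lambda\cdot,\ \mathrm{id},\ \mathrm{id})$ is an isotopism, so $\star$ is again a presemifield isotopic to $*$, and repeating the computation above yields $x\star y-y\star x=\Tr_{q^4/q}\big((\lambda a_1-(\lambda a_1)^{q^2})xy^{q^2}\big)$. Hence $\star$ is commutative as soon as $\lambda a_1\in\F_{q^2}$, i.e.\ as soon as $\lambda^{q^2-1}=a_1/a_1^{q^2}=a_1^2/a_1^{q^2+1}$.

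The main point, and the only step with any content, is that such a $\lambda$ exists. The image of the endomorphism $\lambda\mapsto\lambda^{q^2-1}$ of $\F_{q^4}^*$ is precisely the kernel of the norm $\mathrm{N}_{\F_{q^4}/\F_{q^2}}$, i.e.\ the subgroup of those $c$ with $c^{q^2+1}=1$ (both have order $q^2+1$). So I must check that $c:=a_1^2/a_1^{q^2+1}$ satisfies $c^{q^2+1}=1$. This is where the hypothesis enters: writing $\nu:=a_1^{q^2+1}\in\F_q^*$ we get $c^{q^2+1}=a_1^{2(q^2+1)}/\nu^{q^2+1}=\nu^2/\nu^2=1$, using $a_1^{2(q^2+1)}=\nu^2$ and $\nu^{q^2+1}=\nu^2$ (the latter because $\nu\in\F_q\subseteq\F_{q^2}$). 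Thus $\lambda$ exists and $\star$ is a commutative presemifield.

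Finally, I would close the gap between presemifield and semifield: a commutative presemifield is isotopic to a commutative semifield, since the standard passage from presemifield to semifield (as in \eqref{eq:presemi->semi}, with $B_1=\mathrm{id}$ in the commutative case) is an isotopism that preserves commutativity. Chaining the isotopisms $*\sim\star\sim(\text{commutative semifield})$ proves the claim. I expect no serious obstacle here; the whole argument is short, and the only genuine verification is the norm computation $c^{q^2+1}=1$, which is exactly what the condition ``$a_1^{q^2+1}\in\F_q^*$'' was designed to supply. Should one prefer to remain strictly within the framework set up in the paper, the same conclusion can instead be obtained by computing $A$ from Lemma~\ref{lm:A(x)} and checking the identity of Lemma~\ref{lm:commutative_2} for a suitable $v$, but that route is considerably more computational.
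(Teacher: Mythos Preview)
Your argument is correct and more direct than the paper's. The paper proceeds via Bierbrauer's criterion (Lemma~\ref{lm:commutative_2}): it computes $A$ explicitly from Lemma~\ref{lm:A(x)}, expands $A(v*x)*y$, and checks that this expression is symmetric in $x,y$ precisely when $a_1v\in\F_{q^2}$. You instead short-circuit this by observing that the one-sided isotope $x\star y=(\lambda x)*y$ is already commutative once $\lambda a_1\in\F_{q^2}$, and then pass to a semifield by the standard construction. Both routes rest on the same key point---the existence of a scalar $\lambda$ (or $v$) with $\lambda a_1\in\F_{q^2}$---and in fact this existence is immediate (take $\lambda=a_1^{-1}$), so your norm computation, while correct, is more than is needed; note in particular that the hypothesis on $a_1^{q^2+1}$ is used only to guarantee that $*$ is a presemifield (via Theorem~\ref{th:n=4}), not for the commutativity step. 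Your approach buys simplicity and avoids the somewhat heavy explicit computation of $A(v*x)*y$; the paper's approach has the merit of illustrating the abstract machinery of Lemmas~\ref{lm:commutative_2}--\ref{lm:A(x)}, which is reusable for other switchings.
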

\begin{proof}
	According to Lemma \ref{lm:commutative_2}, we only have to show that there exists some $v$ such that
	\[A(v*x)*y=A(v*y)*x\]
	for every $x$, $y\in\F_{q^4}$, where $A$ is given by \eqref{eq:A(x)}.
	
	Using the same notation as in Lemma \ref{lm:A(x)}, we set $t=a_1+\tilde{a}_0$ and $s=-t/(1+\Tr_{q^4/q}(t))$. Now,
	\begin{align*}
		A(v*x) &= A(vx+\Tr_{q^4/q}(a_1vx^{q^2}+\tilde{a}_0vx))\\
				&= vx+\Tr_{q^4/q}(a_1vx^{q^2}+\tilde{a}_0vx) + 
				\Tr_{q^4/q}\bigl[s(vx+\Tr_{q^4/q}(a_1vx^{q^2}+\tilde{a}_0vx))\bigr]\\
				&= vx+(1+\Tr_{q^4/q}(s))\Tr_{q^4/q}(a_1vx^{q^2}+\tilde{a}_0vx) + \Tr_{q^4/q}(svx)\\
				&= vx + \frac{\Tr_{q^4/q}(a_1vx^{q^2}+\tilde{a}_0vx)}{1+\Tr_{q^4/q}(a_1+\tilde{a}_0)}-\frac{\Tr_{q^4/q}((a_1+\tilde{a}_0)vx)}{1+\Tr_{q^4/q}(a_1+\tilde{a}_0)}\\
				&= vx + \frac{\Tr_{q^4/q}(a_1vx^{q^2}-a_1vx)}{1+\Tr_{q^4/q}(a_1+\tilde{a}_0)}.
	\end{align*}
	For convenience, let $r(x)$ denote $A(v*x)-vx$. Then 
	\begin{align*}
		A(v*x)*y =\,& vxy +r(x) y + \Tr_{q^4/q}(a_1vxy^{q^2}+\tilde{a}_0vxy) +r(x)\Tr_{q^4/q}(a_1 y^{q^2}+\tilde{a}_0 y)\\
		=\,& vxy + \frac{\Tr_{q^4/q}(a_1vx^{q^2}-a_1vx)}{1+\Tr_{q^4/q}(a_1+\tilde{a}_0)}(y+\Tr_{q^4/q}(a_1y^{q^2}+\tilde{a}_0 y))\\
		&+\Tr_{q^4/q}(a_1vxy^{q^2}+\tilde{a}_0vxy).
	\end{align*}
	It is not difficult to see that if $v$ is an element in $\F_{q^4}$ such that $a_1v\in\F_{q^2}$, then $A(v*x)*y=A(v*y)*x$, from which it follows that $(\F_{q^4},+,*)$ is isotopic to a commutative semifield.
\end{proof}

\begin{theorem}
	Let $q$ be a power of an odd prime. Let $a_1\in\Bbb F_{q^4}^*$ such that $a_1^{q^2+1}$ is a square in $\Bbb F_q^*$ and let $\tilde{a}_0$ be an element in $\F_{q^4}$ such that $\Tr_{q^4/q}(\tilde{a}_0)=-1$. Let $x*y$ be defined as in Theorem \ref{th:n=4 commutative}, i.e.,
		\[x*y= xy + \Tr_{q^4/q}(a_1xy^{q^2}+\tilde{a}_0xy).\]
	Then the presemifield $(\F_{q^4}, +, *)$ is isotopic to Dickson's semifield.
\end{theorem}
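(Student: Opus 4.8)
The plan is to pass to the commutative semifield furnished by Theorem~\ref{th:n=4 commutative}, exhibit its middle nucleus, and then match it \emph{explicitly} to a Dickson product; the nucleus computation will organize the argument but the explicit isotopism will be the load-bearing step. First I would simplify the isotopism of Theorem~\ref{th:n=4 commutative}. Choosing $v$ with $c:=a_1v\in\F_{q^2}^*$ and recalling $\Tr_{q^4/q}(\tilde a_0)=-1$, the denominator $1+\Tr_{q^4/q}(a_1+\tilde a_0)$ in \eqref{eq:A(x)} equals $\Tr_{q^4/q}(a_1)$, and since $c\in\F_{q^2}$ one checks $\Tr_{q^4/q}(cx^{q^2})=\Tr_{q^4/q}(cx)$ for every $x$; hence the correction term of Lemma~\ref{lm:A(x)} vanishes and $A(v*x)=vx$. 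The commutative multiplication is therefore simply
\[
x\diamond y:=(vx)*y=vxy+\Tr_{q^4/q}\!\left(cxy^{q^2}+\tilde a_0 vxy\right),
\]
which by Lemma~\ref{lm:commutative_1} is genuinely symmetric because $c\in\F_{q^2}=\F_{q^{\gcd(2,4)}}$.

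Next I would read off the middle nucleus. For $\mu\in\F_{q^4}$ acting by field multiplication, a direct substitution gives
\[
x\diamond(\mu y)-(\mu x)\diamond y=\Tr_{q^4/q}\!\left(c(\mu^{q^2}-\mu)\,xy^{q^2}\right),
\]
which vanishes for all $x,y$ exactly when $c(\mu^{q^2}-\mu)=0$, i.e.\ (as $c\neq0$) precisely when $\mu\in\F_{q^2}$. This identity realizes $\F_{q^2}$ as a subfield of the middle nucleus of the associated semifield, so that semifield is rank (at most) two over its middle nucleus; an analogous computation localizes the center at $\F_q$. Thus the structure is exactly that of a rank-two commutative semifield of order $q^4$, which is the natural home of the Dickson semifields.

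To finish I would set up $\F_{q^2}$-coordinates. Writing $\F_{q^4}=\F_{q^2}[\omega]$ with $\omega^2=\delta$ a nonsquare in $\F_{q^2}$ (so $x^{q^2}=a-b\omega$ and $N_{\F_{q^4}/\F_{q^2}}(x)=a^2-\delta b^2\in\F_{q^2}$ for $x=a+b\omega$), I would compute
\[
x\diamond x=vx^2+2\,\Tr_{q^2/q}\!\left(c\,(a^2-\delta b^2)\right)+\Tr_{q^4/q}(\tilde a_0 vx^2).
\]
In these coordinates the planar function splits into an $\F_{q^2}$-part coming from $vx^2$ and an $\F_q$-valued quadratic form (the trace terms); the Frobenius entering $\Tr_{q^2/q}$ is precisely the automorphism $\sigma\colon t\mapsto t^q$ appearing in Dickson's product $(a,b)\circ(e,f)=(ae+\eta(bf)^q,\,af+be)$. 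I would then produce an explicit invertible $\F_q$-linear isotopism carrying $\diamond$ to this product, with the Dickson nonsquare $\eta$ assembled from $\delta$ and $a_1^{q^2+1}$; the hypothesis that $a_1^{q^2+1}$ is a square in $\F_q^*$ is exactly what forces $\eta$ to be a true nonsquare and the semifield to be proper.

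The hard part will be this last matching, and it is subtler than a bare rank count suggests: commutative semifields of rank two over their middle nucleus are \emph{not} exhausted by fields and Dickson semifields (in characteristic $3$ the Cohen--Ganley and Penttila--Williams semifields also occur), so one cannot close the argument by invoking a generic ``field-or-Dickson'' classification. What rescues the situation is that the twist here is the single monomial $xy^{q^2}$ with $c\in\F_{q^2}$, equivalently that the planar function is built from the norm $N_{\F_{q^4}/\F_{q^2}}$; this monomial (Kantor--Knuth) shape must be matched to Dickson's product by hand. Executing the coordinate change, and the attendant nonsquare bookkeeping that pins down $\eta$ from $a_1^{q^2+1}$ and $\delta$, is where the real work lies.
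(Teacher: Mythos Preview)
Your route diverges from the paper's, and the divergence is instructive. The paper does \emph{not} attempt the explicit Dickson matching you outline. Instead it (i) computes the nuclei of an associated semifield $\bbS$ by passing to the dual spread $x\circ y=xy+(a_1y^{q^2}+\tilde a_0y)\Tr_{q^4/q}(x)$ and using the Knuth-orbit interchange $N_l(\bbS')\cong N_m(\bbS)$, $N_m(\bbS')\cong N_l(\bbS)$, obtaining $|N_m(\bbS)|=q^2$ and $|N_l(\bbS)|=|N_r(\bbS)|=q$; and (ii) invokes the Cardinali--Polverino--Trombetti classification of semifield planes of order $q^4$ with kernel $\F_{q^2}$ and center $\F_q$ to conclude that $\bbS$ is Dickson. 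No coordinate change to the Dickson product is ever written down.

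Your skepticism about closing by classification is misplaced in this particular setting. You are right that rank-two commutative semifields over their middle nucleus are not exhausted by fields and Dickson in general, but the exceptional families you name do not live here: Cohen--Ganley and Ganley have center $\F_3$ and dimension $\ge 6$ over it, and Penttila--Williams has dimension $10$ over $\F_3$; none fits the frame ``order $q^4$, center $\F_q$''. So once you have commutativity, $|N_m|=q^2$, and center $\F_q$, the CPT classification (or, if you prefer, the known list of rank-two commutative semifields restricted to dimension~$4$ over the center) already forces field-or-Dickson, and ruling out the field is the only residual point.

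Your direct approach is not wrong, and your simplification $A(v*x)=vx$ and the $\diamond$-middle-nucleus inclusion are correct and pleasant. But the proposal as written is incomplete: the ``load-bearing'' explicit isotopism to $(a,b)\circ(e,f)=(ae+\eta(bf)^q,af+be)$ is announced but not produced, and the nonsquare bookkeeping tying $\eta$ to $a_1^{q^2+1}$ and $\delta$ is precisely the content that would replace the CPT citation. If you want a self-contained proof avoiding \cite{cardinali_semifield_2006}, you must actually carry that computation out (and separately argue the semifield is not a field, e.g.\ by exhibiting a nucleus strictly smaller than $\F_{q^4}$); otherwise, the paper's shorter route via nuclei plus the classification is available and suffices.
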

\begin{proof}
	We have already shown in Theorem \ref{th:n=4 commutative} that $(\F_{q^4}, +, *)$ is isotopic to a commutative semifield, which is denoted by $\bbS$. Next we are going to prove that its middle nucleus $N_m(\bbS)$ is of size $q^2$ and  its left nucleus $N_l(\bbS)$ is of size $q$. Furthermore, as $\bbS$ is commutative, we have $N_r(\bbS)= N_l(\bbS)$ . Due to the classification of semifields planes of order $q^4$ with kernel $\F_{q^2}$ and center $\F_q$ by Cardinali, Polverino and Trombetti in \cite{cardinali_semifield_2006}, $(\F_{q^4},+,*)$ is isotopic to Dickson's semifield.
	
	To determine the middle and left nuclei of $\bbS$, we need to introduce another presemifield multiplication $x\circ y$, which corresponds to the \emph{dual spread} of the spread defined by $x*y$. (For more details on the dual spread, see \cite{kantor_commutative_2003}.) Actually,  $x\circ y$ is defined as
	\begin{equation}
		x \circ y := xy + (a_1 y^{q^2}+\tilde{a}_0 y) \Tr_{q^4/q}(x).
	\end{equation}
	It is straightforward to verify that $\Tr_{q^4/q}(x(z\circ y)-z(x*y))=0$. Let  $\bbS'$  denote a semifield which is isotopic to the presemifield defined by $x\circ y$. According to the interchanging of nuclei of semifields in the so called \emph{Knuth orbit} (\cite{kantor_commutative_2003} and \cite[Section 1.4]{lavrauw_semifields_2011}), we have $N_l(\bbS')\cong N_m(\bbS)$ and $N_m(\bbS')\cong N_l(\bbS)$.
	
	To determine $N_l(\bbS')$ and $N_m(\bbS')$, we  use the connection between certain homology groups as
	described in \cite[Theorem 8.2]{hughes_projective_1973} and \cite[Result 12.4]{johnson_handbook_2007}. To be precise, we want to find every $q$-linearized polynomial $A(X)$ over $\F_{q^4}$ such that for every  $y\in\F_{q^4}$,  there is a $y'\in\F_{q^4}$ satisfying $A(x)\circ y= x\circ y'$ for every $x\in\F_{q^4}$. The set $\mathcal M(\bbS')$ of all such $A(X)$ is equivalent to the middle nucleus  $N_m(\bbS')$.
	
	First, it is routine to verify that $A(X)=uX$ with $u\in\F_q$ is in $\mathcal{M}(\bbS')$. Next we  show that there are no other $A(X)$ in $\mathcal{M}(\bbS')$.
	
	Assume that
	\begin{equation}\label{eq:AMy=My'}
		A(x)y+\Tr_{q^4/q}(A(x))(a_1y^{q^2}+\tilde{a}_0y)=xy'+\Tr_{q^4/q}(x)(a_1y'^{q^2}+\tilde{a}_0y') 
	\end{equation}
	holds for every $x\in\F_{q^4}$.
	
	Let $x_0\in\F_{q^4}^*$ be  such that $\Tr_{q^4/q}(x_0)=\Tr_{q^4/q}(A(x_0))=0$. Then
	\[A(x_0)y=x_0y'.\]
	It means that $y'=uy$ holds for each $y\in\F_{q^4}$, where $u=A(x_0)/x_0$. Plugging it into \eqref{eq:AMy=My'}, we have
	\[A(x)y+\Tr_{q^4/q}(A(x))(a_1y^{q^2}+\tilde{a}_0y)=uxy+\Tr_{q^4/q}(x)(a_1(uy)^{q^2}+\tilde{a}_0uy).\]
	From this equation we can deduce that
	\begin{eqnarray}
		A(x)-ux+(\Tr_{q^4/q}(A(x))-\Tr_{q^4/q}(x)u)\tilde{a}_0 & = & 0,	\label{eq:coefficient_y}\\
		(\Tr_{q^4/q}(A(x))-\Tr_{q^4/q}(x)u^{q^2})a_1 & = & 0. \label{eq:coefficient_yq2}
	\end{eqnarray}
	Since $a_1\neq 0$, from \eqref{eq:coefficient_yq2} we see that
	\begin{equation}\label{eq:coefficient_yq2_short}
		\Tr_{q^4/q}(A(x))=u^{q^2}\Tr_{q^4/q}(x)
	\end{equation}
	for every $x\in\F_{q^4}$. From \eqref{eq:coefficient_yq2_short} it follows that $u\in\F_q$. Therefore, by \eqref{eq:coefficient_y}, we have $A(x)=ux$ where $u\in\F_q$. Hence  $|N_l(\bbS)|=|N_m(\bbS')|= q$.
	
	Next we  determine every $q$-linearized polynomial $A(X)$ over $\F_{q^4}$ such that for every  $y\in\F_{q^4}$,  there is a $y'\in\F_{q^4}$ satisfying $A(x\circ y)= x\circ y'$ for every $x\in\F_{q^4}$. The set of all such $A(X)$ is equivalent to the left nucleus  $N_l(\bbS')$.
	
	Assume that
	\begin{equation}\label{eq:MyA=My'}
		A(xy+\Tr_{q^4/q}(x)(a_1y^{q^2}+\tilde{a}_0y))=xy'+\Tr_{q^4/q}(x)(a_1y'^{q^2}+\tilde{a}_0y').
	\end{equation}
	It is readily  verified that when $A(X)=cX$ for some $c\in \F_{q^2}$, \eqref{eq:MyA=My'} holds for all $x$ and $y$ in $\F_{q^4}$ with $y'=cy$. Hence $\F_{q^2}$ is a subfield contained in $N_l(\bbS')$. On the other hand, $N_l(\bbS')$ has to be a proper subfield of $\F_{q^4}$, for otherwise $\bbS'$ would be a finite field, which would lead to a contradiction. Therefore, we have $|N_m(\bbS)|=|N_l(\bbS')|=q^2$, which completes the proof.
\end{proof}

\begin{theorem}\label{th:n=3}
	Let $q$ be a power of prime and let $u,v$ be elements in $\F_{q^3}^*$ such that $\text{\rm N}_{q^3/q}(-v/u)\ne 1$. For every $\beta \in 
	\mathcal{B}$, where
	\[\mathcal{B}:= \{x\in\F_{q^3}: \Tr_{q^3/q}(u^{q^2}v^qx)= u^{q^2+q+1}+v^{q^2+q+1}\},\]
	the equation
	\begin{equation}\label{eq:main n=3}
		ux^{q^2-1}+vx^{q-1}+\beta=0
	\end{equation}
	has no solution in $\F_{q^3}^*$.  Let $L(X):=u^{q^2}v^q(ua^{q^2-1}X^{q^2}+va^{q-1}X^{q}+\theta X)$, where $\theta \in\mathcal{B}$ and $a\in\F_{q^3}^*$. Then the polynomial $\Tr_{q^3/q}(L(X)/X)$ has no root in $\F_{q^3}^*$.
\end{theorem}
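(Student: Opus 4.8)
The plan is to prove the statement in two pieces—first the assertion that for each $\beta\in\mathcal{B}$ the equation \eqref{eq:main n=3} has no root in $\F_{q^3}^*$, and then the assertion about $\Tr_{q^3/q}(L(X)/X)$—and the key economy is that the second piece will follow from the first by a short logical observation rather than a fresh computation. Throughout I write $\mathrm{N}=\mathrm{N}_{q^3/q}$ and $\Tr=\Tr_{q^3/q}$, and I note at the outset that $\mathcal{B}=\{x:\Tr(u^{q^2}v^qx)=\mathrm{N}(u)+\mathrm{N}(v)\}$ since $u^{q^2+q+1}=\mathrm{N}(u)$ and $v^{q^2+q+1}=\mathrm{N}(v)$.

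For the first part, I would start from the observation that for $x\in\F_{q^3}^*$, multiplying \eqref{eq:main n=3} by $x$ makes it equivalent to the $q$-linearized equation $ux^{q^2}+vx^q+\beta x=0$ having a nonzero root. By the classical criterion for such a $q$-polynomial to be a bijection (the nonvanishing of the associated $3\times 3$ autocirculant Dickson matrix, cf. \cite{lidl_finite_1997}), a nonzero root exists if and only if $\det D=0$, where
$$D=\begin{pmatrix} \beta & v & u \\ u^q & \beta^q & v^q \\ v^{q^2} & u^{q^2} & \beta^{q^2} \end{pmatrix}.$$
Expanding the determinant and recognizing the three off-diagonal products $\beta v^q u^{q^2}+u\beta^q v^{q^2}+vu^q\beta^{q^2}$ as the single trace $\Tr(u^{q^2}v^q\beta)$, I obtain $\det D=\mathrm{N}(\beta)+\mathrm{N}(u)+\mathrm{N}(v)-\Tr(u^{q^2}v^q\beta)$. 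For $\beta\in\mathcal{B}$ the trace term equals $\mathrm{N}(u)+\mathrm{N}(v)$, so $\det D=\mathrm{N}(\beta)$. It then remains to observe that the hypothesis $\mathrm{N}(-v/u)\neq 1$ is precisely the condition $0\notin\mathcal{B}$ (checking the two parities of $q$: in both cases $0\in\mathcal{B}\iff\mathrm{N}(u)+\mathrm{N}(v)=0\iff\mathrm{N}(-v/u)=1$). Hence every $\beta\in\mathcal{B}$ is nonzero, so $\det D=\mathrm{N}(\beta)\neq 0$ and no root exists.

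For the second part, I would first dispose of the parameter $a$ by the substitution $z=ax$: since $(ax)^{q^i-1}=a^{q^i-1}x^{q^i-1}$, one gets $L(x)/x=u^{q^2}v^q(uz^{q^2-1}+vz^{q-1}+\theta)$ with $z=ax$ ranging over $\F_{q^3}^*$ as $x$ does, so it suffices to show $\Tr(u^{q^2}v^q(uz^{q^2-1}+vz^{q-1}+\theta))\neq 0$ for every $z\in\F_{q^3}^*$. Now fix such a $z$ and set $\beta:=-(uz^{q^2-1}+vz^{q-1})$; then $z$ is by construction a root of \eqref{eq:main n=3} for this $\beta$, so the contrapositive of the first part forces $\beta\notin\mathcal{B}$, i.e. $\Tr(u^{q^2}v^q(uz^{q^2-1}+vz^{q-1}))\neq-(\mathrm{N}(u)+\mathrm{N}(v))$. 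Adding $\Tr(u^{q^2}v^q\theta)=\mathrm{N}(u)+\mathrm{N}(v)$, which holds because $\theta\in\mathcal{B}$, yields $\Tr(u^{q^2}v^q(uz^{q^2-1}+vz^{q-1}+\theta))\neq 0$, as required.

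The main obstacle is concentrated entirely in the first part, namely carrying out the $3\times 3$ determinant expansion and correctly matching the cross-term sum with $\Tr(u^{q^2}v^q\beta)$; the norm/parity verification that $0\notin\mathcal{B}$ is a short case check, and the reduction eliminating $a$ is routine. Once the first part is phrased in the sharp form ``a root of \eqref{eq:main n=3} forces $\beta\notin\mathcal{B}$,'' the second part costs essentially nothing, since the quantity $\Tr_{q^3/q}(L(x)/x)$ is designed so that the constant $\theta\in\mathcal{B}$ exactly cancels the threshold value $\mathrm{N}(u)+\mathrm{N}(v)$ that the variable part is forbidden to hit.
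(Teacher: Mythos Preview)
Your proof is correct, and the second half (the reduction in $a$ and the contrapositive argument showing $\Tr_{q^3/q}(L(x)/x)\ne 0$) is essentially the paper's own argument, just phrased a bit more crisply.

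The first half, however, is genuinely different. The paper does not invoke the Dickson matrix. Instead it argues directly: setting $y=x^{q-1}$, the equation becomes $uy^{q+1}+vy+\beta=0$; from this one solves for $y^q$ and $y^{q^2}$, imposes the constraint $y^{q^2+q+1}=1$, and after an auxiliary step ruling out $u\beta^q=v^{q+1}$ obtains an explicit expression for $y$. Substituting back yields the identity
\[
\beta^{q^2+q+1}-\Tr_{q^3/q}(u^{q^2}v^q\beta)+u^{q^2+q+1}+v^{q^2+q+1}=0,
\]
which for $\beta\in\mathcal{B}$ collapses to $\mathrm{N}(\beta)=0$, hence $\beta=0$; the case $\beta=0$ is handled separately at the outset. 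Your route recognizes that this identity is nothing but $\det D=0$ for the Dickson matrix of $\beta X+vX^q+uX^{q^2}$, and that the existence of a nonzero root is equivalent to the vanishing of this determinant. This buys you a uniform treatment (no separate $\beta=0$ case, no intermediate exclusion of $u\beta^q=v^{q+1}$) and a one-line computation in place of a page of elimination. The paper's approach, on the other hand, is entirely self-contained and does not appeal to the Dickson-matrix criterion, so a reader unfamiliar with that result can follow it without looking anything up.
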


\begin{proof}
	When $\beta =0$, \eqref{eq:main n=3} becomes $x^{q-1}(ux^{q(q-1)}+v)=0$. If there exists $x\in\F_{q^3}^*$ such that $ux^{q(q-1)}+v=0$, then $\text{\rm N}_{q^3/q}(-v/u)=\text{\rm N}_{q^3/q}(x^{q(q-1)})=1$, which leads to a contradiction.
	
	Now suppose $\beta\neq 0$. Assume to the contrary that \eqref{eq:main n=3} has a solution $x\in\F_{q^3}^*$. 
	Let $y:= x^{q-1}$. Then we have $uy^{q+1}+vy+\beta=0$. It follows that
	\begin{equation}\label{eq:n=3 y^q}
		y^q=\frac{-vy-\beta}{uy},
	\end{equation}
	and
	\[y^{q^2}=\frac{v^q(vy+\beta)-\beta^qu y}{-u^q(vy+\beta)}.\]
	Hence 
	\[y^{q^2}y^qy=\frac{v^q(vy+\beta)-\beta^q u y}{u^{q+1}},\]
	which is equal to $1$ since $y=x^{q-1}$.
	Therefore,
	\begin{equation}\label{eq:n=3 pre y}
		(v^{q+1}-\beta^q u)y+v^q\beta =u^{q+1}.
	\end{equation}
	Suppose that $u\beta^q= v^{q+1}$. Then $u^{q^2}v^q \beta =v^{q^2+1}v^q$, and $\Tr_{q^3/q}(u^{q^2}v^q \beta)=3v^{q^2+q+1}$. On the other hand, we also have $u^{q+1}=v^q\beta$ from \eqref{eq:n=3 pre y}. It follows that $\Tr_{q^3/q}(u^{q^2}v^q \beta)=3u^{q^2+q+1}$. All together with $\beta\in\mathcal{B}$, we have that
	\[u^{q^2+q+1}+v^{q^2+q+1}=3v^{q^2+q+1}=3u^{q^2+q+1},\]
	which can not holds for $3\nmid q$. Moreover, if $3\mid q$, then $u^{q^2+q+1}=-v^{q^2+q+1}$ which contradicts the assumption that $\text{\rm N}_{q^3/q}(-v/u)\ne 1$. Hence $u\beta^q\neq v^{q+1}$.	
	
	Since $u\beta^q\neq v^{q+1}$, from \eqref{eq:n=3 pre y} we obtain
	\begin{equation}\label{eq:n=3 y}
		y=\frac{u^{q+1}-v^q\beta}{v^{q+1}-\beta^qu}
	\end{equation}
	Plugging \eqref{eq:n=3 y} into \eqref{eq:n=3 y^q}, we have
	\[\frac{u^{q^2+q}-v^{q^2}\beta^q}{v^{q^2+q}-\beta^{q^2}u^q}=\frac{vu^q-\beta^{q+1}}{v^q\beta-u^{q+1}}.\]
	Hence
	\begin{align*}
	& u^{q^2+q}v^q\beta - u^{q^2+2q+1} + u^{q+1}v^{q^2} \beta^q-v^{q^2+q}\beta^{q+1}\\
	=\,& v^{q^2+q+1}u^q-\beta^{q^2}vu^{2q}-v^{q^2+q}\beta^{q+1}+\beta^{q^2+q+1}u^q.
	\end{align*}
	Dividing it by $u^q$, we have
	\[\beta^{q^2+q+1}-(u^{q}v\beta^{q^2}+uv^{q^2}\beta^q+u^{q^2}v^q\beta)+u^{q^2+q+1}+v^{q^2+q+1}=0.\]
	It follows from $\Tr_{q^3/q}(u^{q^2}v^q\beta)= u^{q^2+q+1}+v^{q^2+q+1}$ that
	$$\beta^{q^2+q+1}=0.$$
	Hence $\beta=0$, which is a contradiction. Therefore, \eqref{eq:main n=3} has no solution in $\F_{q^3}^*$.
	
	Furthermore, if $\Tr_{q^3/q}(L(X)/X)$ has a root $x_0\in \F_{q^3}^*$, then
	$u^{q^2}v^q(u(a x_0)^{q^2-1}+v(ax_0)^{q-1}+\theta)=\gamma$ for some $\gamma\in\F_{q^3}$ satisfying $\Tr_{q^3/q}(\gamma)=0$. We write $\gamma$ as $\gamma=u^{q^2}v^q\tau$ for some $\tau\in\F_{q^3}$. Then $\theta-\tau\in\mathcal{B}$ and 
		\[u(a x_0)^{q^2-1}+v(ax_0)^{q-1}+\theta-\tau=0,
		\]
	which contradicts the fact that \eqref{eq:main n=3} has no solution in $\F_{q^3}^*$.
\end{proof}

For given $u$ and $v$, it is not difficult to see that for different $a$, we obtain isotopic semifields via  Theorem \ref{th:n=3}: Let the multiplication corresponding to $a=1$ be $xy+B(x,y)$. Then for other $a\in\F_{q^3}^*$, the semifield multiplication is $\frac{axy+B(x,ay)}{a}$. Furthermore, when $u$, $v\in\F_q$ and $a=1$, it follows from Lemma~\ref{lm:commutative_1} that
the presemifield $\bbP$ derived from $L(x)$ in Theorem \ref{th:n=3} is commutative. It is worth noting that, up to isotopism, we can obtain non-commutative semifields via Theorem \ref{th:n=3}. For instance, let $q=4$ and let $\xi$ be a primitive element of $\F_{q^3}$ which is a root of $X^6+X^4+X^3+X+1$. Setting $u=\xi^5$, $v=\xi$ and $\beta=\xi^{62}$, we can use computer to show that the presemifield $\bbP$ derived from Theorem \ref{th:n=3} is not isotopic to a commutative one. 

According to the classification of semifields of order $q^3$ with center containing $\F_q$ in \cite{menichetti_kaplansky_1977}, the presemifield obtained via Theorem \ref{th:n=3} is either finite field or generalized twisted field.

Besides all the $L$'s described in this section, we did not find any other examples. Thus we propose the following question:
\begin{question}\label{question:n>4}
For $n>4$, is there a $q$-linearized polynomial $L(X)=\sum_{i=0}^{n-1}a_iX^{q^i}\in\F_{q^n}[X]$ with $(a_1,\dots,a_{n-1})\neq (0,\dots, 0)$ satisfying \eqref{eq:main_L}?
\end{question}
%
%
\section{Switchings of $\F_{p^n}$ for large $n$}\label{se:large n, q=p}
The main result of this section is a negative answer to Question \ref{question:n>4} when $q=p$ (prime) and $n$ is large.

\begin{theorem}\label{th:p prime n large}
Let $q=p$, where $p$ is a prime, and assume $n\ge \frac 12(p-1)(p^2-p+4)$. If $L(X)=\sum_{i=0}^{n-1}a_i X^{p^i}\in\Bbb F_{p^n}[X]$ satisfies \eqref{eq:main_L}, i.e.,
\[
\text{\rm Tr}_{p^n/p}\bigl(L(x)/x\bigr)\ne 0\quad\text{for all}\ x\in\Bbb F_{p^n}^*,
\]
then $a_1=\cdots=a_{n-1}=0$.
\end{theorem}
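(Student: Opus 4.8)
The plan is to reformulate the hypothesis as a polynomial identity modulo $X^{p^n-1}-1$ and then read off coefficients. Write $f(X)=L(X)/X=\sum_{i=0}^{n-1}a_iX^{p^i-1}\in\F_{p^n}[X]$ and $G(X)=\Tr_{p^n/p}(f(X))=\sum_{j=0}^{n-1}f(X)^{p^j}$. Condition \eqref{eq:main_L} says exactly that $G$ is nowhere zero on $\F_{p^n}^*$; since $G$ takes values in $\F_p$, this is equivalent to $G(x)^{p-1}=1$ for every $x\in\F_{p^n}^*$, i.e.\ to the identity
\[
G(X)^{p-1}\equiv 1 \pmod{X^{p^n-1}-1}.
\]
First I would record the monomial structure of $G$. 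For $x\in\F_{p^n}^*$ we have $x^{p^n}=x$, so each term $a_i^{p^j}x^{p^{i+j}-p^j}$ reduces to an exponent whose base-$p$ digit string is a length-$i$ cyclic interval of digits $p-1$ on $\Z/n$; the level $i=0$ terms collapse to the constant $c_0:=\Tr_{p^n/p}(a_0)$. In particular the coefficient of the ``prefix'' monomial $X^{p^t-1}$ in $G$ is exactly $a_t$ for $1\le t\le n-1$, so it is natural to encode the data by the single-variable polynomial $A(z):=c_0+\sum_{i=1}^{n-1}a_iz^i\in\F_{p^n}[z]$; the theorem asserts $A(z)=c_0$.

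The heart of the argument is that, in a suitable low-degree range, raising $G$ to the $(p-1)$-st power mirrors raising $A(z)$ to the $(p-1)$-st power. Concretely, a monomial $X^{p^t-1}$ in $G(X)^{p-1}$ that arises without any carry or wrap-around in the cyclic base-$p$ addition must come from partitioning the prefix $\{0,\dots,t-1\}$ into consecutive sub-intervals, so its coefficient equals $[z^t]A(z)^{p-1}$. Granting (for the relevant $t$) that these are the only contributions, the identity above forces $[z^t]A(z)^{p-1}=\delta_{t,0}$; once the wrap-free range is long enough this gives $A(z)^{p-1}=1$ identically, and since $\F_{p^n}[z]$ is an integral domain $A(z)$ must be a nonzero constant, i.e.\ $a_1=\dots=a_{n-1}=0$. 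To control the geometry I would complement this with a dual ``maximal weight'' estimate: using $p-1$ pairwise disjoint and mutually separated cyclic intervals of the smallest present level $\mu$ (which fit on $\Z/n$ once $(p-1)(\mu+1)\le n$) produces a monomial of maximal digit sum with a \emph{unique} such decomposition, whence, by Wilson's theorem ($(p-1)!\equiv-1\bmod p$), its coefficient in $G^{p-1}$ is $\pm a_\mu^{\,p^{j_1}+\dots+p^{j_{p-1}}}\ne 0$, a contradiction. Applying the same estimate to the polynomial obtained from the substitution $x\mapsto x^{-1}$ (which replaces $L$ by $\tilde L(X)=\sum_i a_{(n-i)\bmod n}^{p^i}X^{p^i}$, again satisfying \eqref{eq:main_L}) converts the smallest level into $n$ minus the largest level; together these confine all nonzero levels to a narrow middle band of $[1,n-1]$.

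The main obstacle — and the place where the hypothesis $n\ge\frac12(p-1)(p^2-p+4)$ is actually used — is controlling the carry/wrap-around contributions that I suppressed above, together with the degenerate case $c_0=0$ (where no ``constant'' factors are available and the target must be read off $X^{p^{\mu(p-1)}-1}$ instead of $X^{p^\mu-1}$). A spurious contribution to a short prefix $X^{p^t-1}$ occurs precisely when a family of at most $p-1$ cyclic intervals has digit-values summing to $p^t-1+r(p^n-1)$ with $r\ge 1$; such a family must wind around $\Z/n$ and hence have large total length, while the band confinement from the previous step bounds the available interval lengths. The quantitative balance between these two — how long the prefix range can be kept free of wrap-around given that there are only $p-1$ intervals of length at most the band width — is exactly what produces the threshold $\tfrac12(p-1)(p^2-p+4)$. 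I expect this carry-and-wrap bookkeeping, rather than the algebraic skeleton, to be the technically hardest part, and I would organize it as an explicit lower bound on the total length of any winding configuration, ruling out contamination of all prefixes $X^{p^t-1}$ for $t$ up to the degree needed to conclude $A(z)^{p-1}=1$.
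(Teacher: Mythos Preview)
Your skeleton has the right starting point (the identity $G^{p-1}\equiv 1$ and reading off coefficients), but the central algebraic step is incorrect. For the prefix monomial $X^{p^t-1}$, a carry-free contribution corresponds to an ordered partition of $\{0,\dots,t-1\}$ into consecutive blocks $[j_\ell,j_\ell+i_\ell-1]$, and the factor it contributes is $a_{i_\ell}^{\,p^{j_\ell}}$, not $a_{i_\ell}$. Thus the carry-free coefficient is a Frobenius-twisted sum, not $[z^t]A(z)^{p-1}$. For instance with $p=3$, $t=2$ one gets $2c_0a_2+2a_1^{1+p}$, whereas $[z^2]A(z)^2=2c_0a_2+a_1^2$. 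So the conclusion ``$A(z)^{p-1}=1$ identically, hence $A$ is constant'' does not follow from the vanishing of these coefficients. (An inductive version --- assume $a_1=\dots=a_{t-1}=0$ and read off $(p-1)c_0^{p-2}a_t$ --- does survive the twist, but it needs $c_0\ne 0$, which is exactly what you flagged as the degenerate case and did not handle.)

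There is a second gap in the ``maximal-weight'' step. Taking $\alpha$ to be $p-1$ separated blocks of length $\mu$, the $k=0$ decomposition is indeed forced, but $C(\alpha)$ also collects the decompositions of $\alpha+k\cdot\frac{p^n-1}{p-1}$ for $1\le k\le p-2$; those have the same ascending/descending positions and admit representations using intervals of lengths other than $\mu$. Uniqueness therefore fails unless you already know that the corresponding $a_i$ vanish --- which is circular in your ordering (ends first, then middle). The paper resolves this by reversing the order: Lemma~\ref{lm:for thm p prime n large} uses staircase-shaped $\alpha$ with $\text{asc}(\alpha)=p-1$ chosen so that $\alpha+(k\cdots k)$ has $\text{asc}\ge p$ for every $1\le k\le p-2$, killing all wrap-around at once; an induction on these relations clears the middle range $1+\tfrac12(p-2)(p-1)\le i\le n-p$ \emph{first}, and only then are the separated-block $\alpha$'s (your max-weight idea) used in $3^\circ$--$4^\circ$, where the now-known middle vanishing supplies the missing uniqueness. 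Working with $\text{asc}=p-1$ targets also makes the constant $c_0$ disappear from every coefficient formula, so the $c_0=0$ case needs no separate treatment.
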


In 1971, Payne \cite{payne_linear_1971} considered a similar problem which calls for the determination of all $2$-linearized polynomials $L=\sum_{i=0}^{n-1}a_i X^{2^i}\in\Bbb F_{2^n}[ X]$ such that both $L( X)$ and $L( X)/ X$ are permutation polynomials of $\Bbb F_{2^n}$. Such linearized polynomials give rise to translation ovoids in the projective plane $\text{PG}(2,\Bbb F_{2^n})$ \cite{payne_complete_1971}. Payne later solved the problem by showing that such linearized polynomials can have only one term \cite{payne_complete_1971}. For a different proof of Payne's theorem, see \cite[\S8.5]{hirschfeld_projective_1998}. For the $q$-ary version of Payne's theorem, see \cite{hou_solution_2004}.

\subsection{Preliminaries}

Let $L(X)=\sum_{i=0}^{n-1}a_i X^{q^i}\in\Bbb F_{q^n}[ X]$. For $x\in\Bbb F_{q^n}^*$, we have
\[
\text{Tr}_{q^n/q}\Bigl(\frac{L(x)}x\Bigr)=\text{Tr}_{q^n/q}\Bigl(\sum_{i=0}^{n-1}a_ix^{q^i-1}\Bigr)=\sum_{0\le i,j\le n-1}a_i^{q^j}x^{q^j(q^i-1)}.
\]
Therefore \eqref{eq:main_L} is equivalent to
\begin{equation}\label{eq:main_L^(q-1)}
\begin{split}
\biggl[\;\sum_{0\le i,j\le n-1}a_i^{q^j} X^{q^j(q^i-1)}\;\biggr]^{q-1}\equiv\;&\text{Tr}_{q^n/q}(a_0)^{q-1}+\Bigl[1-\text{Tr}_{q^n/q}(a_0)^{q-1}\Bigr] X^{q^n-1}\cr
&\kern-3mm\pmod{ X^{q^n}- X}.
\end{split}
\end{equation}
Let $\Omega=\{0,1,\dots,q^n-1\}$ and $\Omega_0=\{0,1,\dots,\frac{q^n-1}{q-1}\}$. For $\alpha,\beta\in \Omega_0$, define $\alpha\oplus\beta\in\Omega_0$ such that $\alpha\oplus\beta\equiv \alpha+\beta\pmod{\frac{q^n-1}{q-1}}$ and 
\[
\alpha\oplus\beta
=\begin{cases}
0&\text{if}\ \alpha=\beta=0,\cr
\frac{q^n-1}{q-1}&\text{if}\ \alpha+\beta \equiv 0\pmod{\frac{q^n-1}{q-1}}\ \text{and}\ (\alpha, \beta)\neq (0,0).
\end{cases}
\]
For $d_0,\dots,d_{n-1}\in \Bbb Z$, we write 
\[
(d_0,\dots,d_{n-1})_q=\sum_{i=0}^{n-1}d_iq^i.
\]
When $q$ is clear from the context, we write $(d_0,\dots,d_{n-1})_q=(d_0,\dots,d_{n-1})$. For $j,i\in\Bbb Z$, $i\ge 0$, let 
\[
s(j,i)=(\overset 0 0\ \cdots\ 0\ \underbrace{\overset j1\ \cdots\ 1}_i\ 0\ \cdots \overset{n-1}0)_q,
\] 
where the positions of the digits are labeled modulo $n$ and the string of $1$'s may wrap around. For example, with $n=4$,
\[
s(1,3)=(0\; 1\; 1\; 1),\qquad s(3,2)=(1\; 0\; 0\; 1).
\]
Note that
\[
s(j,i)\equiv q^j\frac{q^i-1}{q-1}\pmod{q^n-1}.
\]
For each $\alpha\in\Omega_0$, let $C(\alpha)$ denote the coefficient of $X^{\alpha(q-1)}$ in the left side of \eqref{eq:main_L^(q-1)} after reduction modulo $X^{q^n}-X$. Then we have
\begin{equation}\label{eq:C(alpha)}
C(\alpha)=\sum_{\substack{0\le j_1,i_1,\dots,j_{q-1},i_{q-1}\le n-1\cr s(j_1,i_1)\oplus\cdots\oplus s((j_{q-1},i_{q-1})=\alpha}}a_{i_1}^{q^{j_1}}\cdots a_{i_{q-1}}^{q^{j_{q-1}}}.
\end{equation}
Let
\[
S=\{s(j,i):0\le j\le n-1,\ 1\le i\le n-1\}.
\]
If $C(\alpha)=0$, we can derive from \eqref{eq:C(alpha)} useful information about $a_i$'s if we know the possible ways to express $\alpha$ as an $\oplus$ sum of $q-1$ elements (not necessarily distinct) of $S\cup\{0\}$.

Let $\alpha=(d_0,\dots,d_{n-1})_q\in\Omega$, where $0\le d_i\le q-1$. If $d_i>d_{i-1}$ ($d_i<d_{i-1}$), where the subscripts are taken modulo $n$, we say that $i$ is an {\em ascending} ({\em descending}) position of $\alpha$ with multiplicity $|d_i-d_{i-1}|$. The multiset of ascending (descending) positions of $\alpha$ is denoted by $\text{Asc}(\alpha)$ ($\text{Des}(\alpha)$). The multiset cardinality $|\text{Asc}(\alpha)|$ ($=|\text{Des}(\alpha)|$) is denoted by $\text{asc}(\alpha)$. For example, if $\alpha=(2\; 0\; 1\; 1\; 3\; 0)$, then
\[
\text{Asc}(\alpha)=\{0,0,2,4,4\},\quad \text{Des}(\alpha)=\{1,1,5,5,5\},\quad \text{asc}(\alpha)=5.
\]
Assume that $\alpha\in\Omega$ has $\text{asc}(\alpha)=q-1$. Then $\alpha$ cannot be a sum of less than $q-1$ elements (not necessarily distinct) of $S$. Moreover, if 
\[
\alpha=s(j_1,i_1)+\cdots+s(j_{q-1},i_{q-1}),
\]
where $0\le j_1,\dots,j_{q-1}\le n-1$ and $1\le i_1,\dots,i_{q-1}\le n-1$, we must have $\{j_1,\dots,j_{q-1}\}=\text{Asc}(\alpha)$ and $\{j_1+i_1,\dots,j_{q-1}+i_{q-1}\}=\text{Des}(\alpha)$, where $j_k+i_k$ is taken modulo $n$. 

\subsection{Proof of Theorem \ref{th:p prime n large}}

\begin{lemma}\label{lm:for thm p prime n large}
Let $q=p$, where $p$ is a prime, and assume $L=\sum_{i=0}^{n-1}a_i{X}^{p^i}\in\Bbb F_{p^n}[{X}]$ satisfies \eqref{eq:main_L}. Then for all $1\le i_1<\cdots<i_{p-1}$ and $0\le t_{p-2}\le \cdots\le t_1$ with $i_{p-1}+t_1\le n-2$, we have
\begin{equation}\label{eq:sum of a=0}
\sum_\tau a_{i_{p-1}+\tau(p-1)}a_{i_{p-2}+\tau(p-2)}^{p^{i_{p-1}-i_{p-2}}}\cdots  a_{i_1+\tau(1)}^{p^{i_{p-1}-i_1}}=0,
\end{equation}
where $(\tau(1),\dots,\tau(p-1))$ runs through all permutations of $(t_1,\dots,t_{p-2},0)$.
\end{lemma}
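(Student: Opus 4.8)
The plan is to read \eqref{eq:sum of a=0} off as the vanishing of a single coefficient $C(\alpha)$ in \eqref{eq:C(alpha)}, for a cleverly chosen $\alpha$. Since $L$ satisfies \eqref{eq:main_L}, the reformulation \eqref{eq:main_L^(q-1)} holds, and hence $C(\alpha)=0$ for every $\alpha\in\Omega_0$ with $\alpha\neq 0$ and $\alpha\neq\frac{p^n-1}{p-1}$. The entire argument is to engineer $\alpha$ so that the decompositions recorded by \eqref{eq:C(alpha)} are exactly the monomials appearing in the left side of \eqref{eq:sum of a=0}.

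First I would build $\alpha$ from the data. Set $j_k:=i_{p-1}-i_k$ for $1\le k\le p-1$, so that $j_{p-1}=0<j_{p-2}<\cdots<j_1=i_{p-1}-i_1$ are $p-1$ distinct positions, and take $\alpha=s(0,i_{p-1})\oplus\bigoplus_{k=1}^{p-2}s(i_{p-1}-i_k,\,i_k+t_k)$. The hypothesis $i_{p-1}+t_1\le n-2$ confines every constituent string to the window $\{0,\dots,n-2\}$, so this $\oplus$-sum is a genuine carry-free ordinary sum and $\alpha$ has the ``ramp'' digit profile: it rises by $1$ at each ascending position $j_k$, sits at the plateau value $p-1$ on $[i_{p-1}-i_1,\,i_{p-1}-1]$, and descends back to $0$ across $i_{p-1},i_{p-1}+t_{p-2},\dots,i_{p-1}+t_1$. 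Thus $\text{asc}(\alpha)=p-1$ with $\text{Asc}(\alpha)=\{j_1,\dots,j_{p-1}\}$ (all distinct) and $\text{Des}(\alpha)=\{i_{p-1},\,i_{p-1}+t_{p-2},\dots,i_{p-1}+t_1\}$ (a multiset), while position $n-1$ still carries the digit $0$. In particular $\alpha\neq 0$ and $\alpha\neq\frac{p^n-1}{p-1}=(1,\dots,1)_p$, so $C(\alpha)=0$.

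Next I would enumerate the decompositions in \eqref{eq:C(alpha)}. Because $\text{asc}(\alpha)=p-1$, the structural observation at the end of the Preliminaries applies: in any decomposition $\alpha=s(j_1',i_1')\oplus\cdots\oplus s(j_{p-1}',i_{p-1}')$ the ascending positions are forced to equal $\text{Asc}(\alpha)$ and the descending positions $\{j_k'+i_k'\}$ to equal $\text{Des}(\alpha)$; in particular all $i_k'\ge 1$, so no trivial ($a_0$) factor occurs. Since all $p-1$ strings must cross the plateau, every matching of the distinct ascending positions to the descending positions is admissible, so a decomposition is exactly a bijection assigning the shift multiset $\{0,t_{p-2},\dots,t_1\}$ to the indices: matching $j_k'=i_{p-1}-i_m$ to $i_{p-1}+\sigma(m)$ forces $i_k'=i_m+\sigma(m)$ and produces the monomial $\prod_m a_{i_m+\sigma(m)}^{p^{i_{p-1}-i_m}}$, which is precisely a term of \eqref{eq:sum of a=0}. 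Tracking multiplicities then closes the argument: each unordered decomposition occurs $(p-1)!$ times among the ordered tuples of \eqref{eq:C(alpha)}, whereas in \eqref{eq:sum of a=0} each distinct monomial is repeated $\prod_v (\text{mult of }v)!$ times by permutations of equal shifts, so $C(\alpha)=\binom{p-1}{m_1,\dots,m_r}\cdot(\text{left side of \eqref{eq:sum of a=0}})$, where $m_1,\dots,m_r$ are the multiplicities of the distinct shift values. As these sum to $p-1<p$, the multinomial coefficient is a $p$-adic unit (Wilson and Lucas/Kummer), and $C(\alpha)=0$ yields \eqref{eq:sum of a=0}.

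The main obstacle is exactly the bookkeeping of the third paragraph: proving that the $\oplus$-sum decompositions counted by $C(\alpha)$ coincide with honest carry-free sums and are enumerated precisely by the matchings, with no spurious contributions leaking in. This is where $i_{p-1}+t_1\le n-2$ is essential, since it rules out both wrap-around (which would break the identification of $\oplus$ with $+$) and carries (which would push digits above $p-1$). Everything else is the digit calculus of the Preliminaries, and the proof should reduce to verifying these two structural claims and checking that the combinatorial factor is a unit modulo $p$.
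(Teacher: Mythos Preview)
Your $\alpha$ and overall strategy coincide with the paper's, and your multiplicity bookkeeping is fine (the paper simply factors out $(p-1)!$, which is nonzero in $\Bbb F_p$, but your multinomial argument gives the same conclusion). However, the step you flag as ``the main obstacle'' is under-specified in a way that matters.

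The operation $\oplus$ is addition modulo $\frac{p^n-1}{p-1}=(1,1,\dots,1)_p$, so an $\oplus$-decomposition $s(j_1',i_1')\oplus\cdots\oplus s(j_{p-1}',i_{p-1}')=\alpha$ is the same thing as an ordinary sum $s(j_1',i_1')+\cdots+s(j_{p-1}',i_{p-1}')=\alpha+k\,(1,\dots,1)_p$ for some $0\le k\le p-2$. The structural observation from the Preliminaries that you invoke (ascending positions force the starts, descending positions force the ends) applies only to the honest sum $\alpha$, i.e.\ the case $k=0$. To exclude spurious contributions you must rule out the cases $k=1,\dots,p-2$ separately, and this is \emph{not} a statement about your particular decomposition staying inside the window $\{0,\dots,n-2\}$ --- it is a statement about \emph{every} element $\alpha+(k,\dots,k)$ having $\text{asc}\ge p$, so that no $(p-1)$-fold decomposition into $S\cup\{0\}$ exists at all. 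The paper carries out exactly this digit computation: adding $(k,\dots,k)$ creates a carry across the plateau of $(p-1)$'s, and one then checks (using the $\ge 2$ trailing zeros guaranteed by $i_{p-1}+t_1\le n-2$) that the resulting word has at least $p$ ascents, with a small case split on whether the carry propagates into the tail. Your proposal does not mention this computation, and the phrase ``rules out wrap-around and carries'' does not capture it; without it, the identification of $C(\alpha)$ with $(p-1)!$ times the left side of \eqref{eq:sum of a=0} is unjustified.
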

\begin{proof}
Let
\[
\begin{split}
\alpha=(&\overbrace{1\ \cdots\ 1}^{i_{p-1}-i_{p-2}}\ \cdots\ \overbrace{p-2\ \cdots\ p-2}^{i_2-i_1}\ \overbrace{p-1\ \cdots\ p-1}^{i_1}\cr
&\kern1mm \underbrace{p-2\ \cdots\ p-2}_{t_{p-2}}\ \underbrace{p-3\ \cdots\ p-3}_{t_{p-3}-t_{p-2}}\ \cdots\ \underbrace{1\ \cdots\ 1}_{t_1-t_2}\ \underbrace{0\ \cdots\ 0}_{\substack{n-i_{p-1}-t_1\cr \ge 2}})\in\Omega_0.
\end{split}
\]
For $1\le k\le p-2$, we have
\[
\begin{split}
&\alpha+(k\ \cdots\ k)\cr
=\;&(\;\overbrace{k+1\ \cdots\ k+1\ \cdots\ p-1\ \cdots\ p-1\ 0\ 1\ \cdots\ 1\ \cdots}^{i_{p-1}}\ \overbrace{\cdots\ d}^{t_1}\ \overbrace{e\ \underbrace{k\ \cdots\ k}_{\ge 1}}^{n-i_{p-1}-t_1}\;),
\end{split}
\]
where $e=k+1$ or $k$, depending on whether it receives a carry from the preceding digit. If $e=k+1$, then $\text{asc}(\alpha+(k\;\cdots\; k))\ge p-1-k+k+1=p$. If $e=k$, then $t_1>0$ and $d\ge k+1$, which also implies that $\text{asc}(\alpha+(k\;\cdots\; k))\ge p$. Therefore $\alpha+(k\;\cdots\; k)$ is not a sum of $\le p-1$ elements (not necessarily distinct) of $S$, i.e., not a sum of $p-1$ elements 
(not necessarily distinct) of $S\cup\{0\}$. 

On the other hand, we have $\text{asc}(\alpha)=p-1$ and
\[
\begin{split}
\text{Asc}(\alpha)\,&=\{0,i_{p-1}-i_{p-2},\dots,i_{p-1}-i_1\},\cr
\text{Des}(\alpha)\,&=\{i_{p-1},i_{p-1}+t_{p-2},\dots,i_{p-1}+t_1\}.
\end{split}
\]
Therefore, the only possible ways to express $\alpha$ as a sum of $p-1$ elements (not necessarily distinct) of $S\cup\{0\}$ are 
\[
\alpha=s(0,i_{p-1}+\tau(p-1))+s(i_{p-1}-i_{p-2},i_{p-2}+\tau(p-2))+\cdots+s(i_{p-1}-i_1,i_1+\tau(1)),
\]
where $(\tau(1),\dots,\tau(p-1))$ is a permutation of $(t_1,\dots,t_{p-2},0)$.  Together with the fact that for $1\le k\le p-2$, $\alpha+(k\;\cdots\;k)$ is not a sum of $p-1$ elements (not necessarily distinct) of $S\cup\{0\}$, we have proved that 
\[
\alpha=\alpha_1\oplus\cdots\oplus\alpha_{p-1},\quad \alpha_i\in S\cup\{0\},
\]
if and only if
\[
\begin{split}
&\{\alpha_1,\dots,\alpha_{p-1}\}\cr
=\,&\bigl\{s(0,i_{p-1}+\tau(p-1)),s(i_{p-1}-i_{p-2},i_{p-2}+\tau(p-2)),\dots,s(i_{p-1}-i_1,i_1+\tau(1))\bigr\},
\end{split}
\]
where $(\tau(1),\dots,\tau(p-1))$ is a permutation of $(t_1,\dots,t_{p-2},0)$. 

Now we have
\begin{equation}\label{C=0}
\begin{split}
0\,&=C(\alpha)\kern 7.1cm\text{(by \eqref{eq:main_L^(q-1)})}\cr
&=(p-1)!\sum_\tau a_{i_{p-1}+\tau(p-1)}a_{i_{p-2}+\tau(p-2)}^{p^{i_{p-1}-i_{p-2}}}\cdots  a_{i_1+\tau(1)}^{p^{i_{p-1}-i_1}}\kern 0.5cm\text{(by \eqref{eq:C(alpha)})},
\end{split}
\end{equation}
which gives \eqref{eq:sum of a=0}.
\end{proof}

\begin{proof}[Proof of Theorem \ref{th:p prime n large}]
$1^\circ$ We first show that for all $1\le k\le p-1$ and 
\[
1+\sum_{j=0}^{k-1} j\le i_k<\cdots<i_{p-1}\le n-k-1,
\]
we have 
\[
a_{i_k}\cdots a_{i_{p-1}}=0.
\]
We use induction on $k$. When $k=1$, the conclusion follows from Lemma~\ref{lm:for thm p prime n large} with $t_{p-2}=\cdots=t_1=0$. Assume $2\le k\le p-1$. In Lemma~\ref{lm:for thm p prime n large}, let $t_1=k-1$, $t_2=k-2,\ \dots,\
t_{k-1}=1$, $t_k=\cdots=t_{p-2}=0$, $i_{k-1}=i_k-1$, $i_{k-2}=i_k-2,\ \dots,\ i_1=i_k-(k-1)$, and note that $i_{p-1}+t_1=i_{p-1}+k-1\le n-2$. We have
\begin{equation}\label{eq:sum of a*s}
\sum_\tau a_{i_{p-1}+\tau(p-1)}^*\cdots a_{i_1+\tau(1)}^*=0,
\end{equation} 
where $(\tau(1),\dots,\tau(p-1))$ runs through all permutations of $(k-1,k-2,\dots,1,0,\dots,0)$ and the $*$'s are suitable powers of $p$. (In general, we use a $*$ to denote a positive integer exponent whose exact value is not important.) Multiplying \eqref{eq:sum of a*s} by $a_{i_k}\cdots a_{i_{p-1}}$ gives 
\begin{equation}\label{eq:sum of a*s_2}
a_{i_k}^*\cdots a_{i_{p-1}}^*+\sum_{\substack{\tau\cr (\tau(1),\dots,\tau(k-1))\ne(k-1,\dots,1)}} a_{i_k}\cdots a_{i_{p-1}}a_{i_{p-1}+\tau(p-1)}^*\cdots a_{i_1+\tau(1)}^*=0.
\end{equation}
When $(\tau(1),\dots,\tau(k-1))\ne(k-1,\dots,1)$, at least one of $i_1+\tau(1),\dots,i_{p-1}+\tau(p-1)$, say $i_{k-1}'$, is less than $i_k$. Also note that $i_{k-1}'\ge i_1=i_k-(k-1)\ge 1+1+2+\cdots+(k-2)$. Therefore by the induction hypothesis, $a_{i_{k-1}'}a_{i_k}\cdots a_{i_{p-1}}=0$. Thus the $\sum$ in \eqref{eq:sum of a*s_2} equals $0$, which gives $a_{i_k}\cdots a_{i_{p-1}}=0$.

\medskip

$2^\circ$ Let $k=p-1$ in $1^\circ$. We have
\[
a_i=0\quad\text{for all}\ 1+\frac12(p-2)(p-1)\le i\le n-p.
\]

\medskip

$3^\circ$ 
We claim that 
\[
a_i=0\quad\text{for all}\ 1\le i\le \frac12(p-2)(p-1).
\]
Assume to the contrary that this is not true. Let $1\le l\le\frac 12(p-2)(p-1)$ be the largest integer such that $a_l\ne 0$. Let 
\[
\alpha=(\underbrace{\underbrace{1\ \cdots\ 1}_l\ \underbrace{0\ \cdots\ 0}_{p+1}\ \underbrace{1\ \cdots\ 1}_l\ \underbrace{0\ \cdots\ 0}_{p+1}\ \cdots\ \underbrace{1\ \cdots\ 1}_l\ \underbrace{0\ \cdots\ 0}_{p+1}}_{\text{$p-1$ copies}}\ 0\ \cdots\ 0)\in\Omega_0.
\]
(Here we used the assumption that $n\ge(p-1)\bigl[\frac12(p-2)(p-1)+p+1\bigr]$.) For $0\le k\le p-2$, we have $\text{asc}(\alpha+(k\; \cdots\; k))=p-1$ and
\[
\begin{split}
\text{Asc}\bigl(\alpha+(k\; \cdots\; k)\bigr)&=\bigl\{0,\,l+p+1,\, 2(l+p+1),\,\dots,\, (p-2)(l+p+1)\bigr\},\cr
\text{Des}\bigl(\alpha+(k\; \cdots\; k)\bigr)&=\bigl\{l,\, l+p+1+l,\, 2(l+p+1)+l,\, \dots,\, (p-2)(l+p+1)+l\bigr\}.
\end{split}
\]
If $\alpha+(k\; \cdots\; k)$ is expressed as a sum of $p-1$ elements (not necessarily distinct) of $S$, the expression must be of the form
\begin{equation}\label{eq:thm p prime n large: part 3: No1}
\alpha+(k\; \cdots\; k)=s(0,i_1)+s(l+p+1,i_2)+\cdots+s((p-2)(l+p+1),i_{p-1}),
\end{equation}
where $i_1,\dots,i_{p-1}\in\{1,\dots,n-1\}$, and in modulus $n$
\begin{equation}\label{eq:thm p prime n large: part 3: No2}
\begin{split}
&\bigl\{i_1,\, l+p+1+i_2,\, \dots,\, (p-2)(l+p+1)+i_{p-1}\bigr\}\cr
=\;&\bigl\{l,\, l+p+1+l,\, 2(l+p+1)+l,\, \dots,\, (p-2)(l+p+1)+l\bigr\}.
\end{split}
\end{equation}
We further require $a_{i_1}\cdots a_{i_{p-1}}\ne 0$, which implies that $i_1,\dots,i_{p-1}\in\{1,\dots,l\}\cup\{n-p+1,\dots, n-1\}$. It follows from \eqref{eq:thm p prime n large: part 3: No2} that $i_1=\cdots=i_{p-1}=l$. Thus we have
\begin{equation}\label{C=0A}
\begin{split}
0\,&=C(\alpha)\kern 4.9cm\text{(by \eqref{eq:main_L^(q-1)})}\cr
&=(p-1)!\,a_l^{p^0}a_l^{p^{l+p+1}}\cdots a_l^{p^{(p-2)(l+p+1)}}\kern 0.5cm\text{(by \eqref{eq:C(alpha)} and \eqref{eq:thm p prime n large: part 3: No1})},
\end{split}
\end{equation}
which is a contradiction.

\medskip

$4^\circ$ Finally, we show that
\[
a_i=0\quad\text{for all}\ n-p+1\le i\le n-1.
\]
Assume to the contrary that this is not true. Let $n-l\in\{n-p+1,\dots,n-1\}$ be the smallest integer such that $a_{n-l}\ne 0$. Let 
\[
\alpha=(1\ \cdots\ 1\ \underbrace{\underbrace{0\ \cdots\ 0}_l\ 1\ \cdots\ \underbrace{0\ \cdots\ 0}_l\ 1\ \underbrace{0\ \cdots\ 0}_l\ 1}_{\text{$p-1$ copies}})\in\Omega_0.
\]
For $0\le k\le p-2$, we have $\text{asc}(\alpha+(k\; \cdots\; k))=p-1$ and
\[
\begin{split}
\text{Asc}\bigl(\alpha+(k\; \cdots\; k)\bigr)&=\bigl\{n-1,\, n-1-(l+1),\, \dots,\, n-1-(p-2)(l+1)\bigr\},\cr
\text{Des}\bigl(\alpha+(k\; \cdots\; k)\bigr)&=\bigl\{n-1-l,\, n-1-l-(l+1),\, \dots,\, n-1-l-(p-2)(l+1)\bigr\}.
\end{split}
\]
If $\alpha+(k\; \cdots\; k)$ is expressed as a sum of $p-1$ elements (not necessarily distinct) of $S$, the expression must be of the form
\begin{equation}\label{eq:thm p prime n large: part 4: No1}
\begin{split}
& \alpha+(k\; \cdots\; k)\cr
=\,& s(n-1,i_1)+s(n-1-(l+1),i_2)+\cdots+s(n-1-(p-2)(l+1),i_{p-1}),
\end{split}
\end{equation}
where $i_1,\dots,i_{p-1}\in\{1,\dots,n-1\}$, and in modulus $n$
\begin{equation}\label{eq:thm p prime n large: part 4: No2}
\begin{split}
&\bigl\{n-1+i_1,\, n-1-(l+1)+i_2,\, \dots,\, n-1-(p-2)(l+1)+i_{p-1}\bigr\}\cr
=\,&\bigl\{n-1-l,\, n-1-l-(l+1),\, \dots,\, n-1-l-(p-2)(l+1)\bigr\}.
\end{split}
\end{equation}
We further require $a_{i_1}\cdots a_{i_{p-1}}\ne 0$, which implies that $i_1,\dots,i_{p-1}\in\{n-l,\dots,n-1\}$. Under this restriction, it is easy to see that \eqref{eq:thm p prime n large: part 4: No2} forces $i_1=\cdots i_{p-1}=n-l$. Thus we have
\begin{equation}\label{C=0B}
\begin{split}
0\,&=C(\alpha)\kern 6.1cm\text{(by \eqref{eq:main_L^(q-1)})}\cr
&=(p-1)!\,a_{n-l}^{p^{n-1}}a_{n-l}^{p^{n-1-(l+1)}}\cdots a_{n-l}^{p^{n-1-(p-2)(l+1)}}\kern 0.5cm\text{(by \eqref{eq:C(alpha)} and \eqref{eq:thm p prime n large: part 4: No1})},
\end{split}
\end{equation}
which is a contradiction.
\end{proof}

It appears that the assumption that $n\ge \frac 12(p-1)(p^2-p+4)$ in Theorem~\ref{th:p prime n large} may be weakened. On the other hand, when $q$ is not a prime,
the proofs of Lemma~\ref{lm:for thm p prime n large} and Theorem~\ref{th:p prime n large} fail for the following reason: In \eqref{C=0}, \eqref{C=0A} and \eqref{C=0B}, $(p-1)!$ is replaced by $(q-1)!$, which is $0$ in $\Bbb F_q$. When $q=p^e$, \eqref{eq:main_L^(q-1)} becomes
\[
\begin{split}
\biggl[\;\prod_{k=0}^{e-1}\sum_{0\le i,j\le n-1}a_1^{p^kq^j}X^{p^kq^j(q^i-1)}\;\biggr]^{p-1}\equiv\;&\text{Tr}_{q^n/q}(a_0)^{q-1}+\Bigl[1-\text{Tr}_{q^n/q}(a_0)^{q-1}\Bigr]X^{q^n-1}\cr
&\kern-3mm\pmod{X^{q^n}-X}.
\end{split}
\]
The question is how to decipher this equation.

\section{A connection to some cyclic codes for general $\F_q$}\label{se:large n and cyclic codes}

In this section we  prove certain necessary conditions for a $q$-linearized polynomials $L(X) \in \F_{q^n}[X]$ to satisfy $\Tr_{q^n/q}(L(x)/x) \neq 0$ for all $x \in \F_{q^n}^*$, where $q$ is  a prime power.  In particular, we give a natural connection to some cyclic codes. There is also a connection of such cyclic codes to some algebraic curves. In the next section, we will use this connection to algebraic curves to get some necessary conditions for such $q$-linearized polynomials $L(X) \in \F_{q^n}[X]$.

If $L(X)=a_0X \in \F_{q^n}[X]$, then
$\Tr_{q^n/q}(L(x)/x) \neq 0$ for all $x \in \F_{q^n}^*$ if and only if $\Tr_{q^n/q}(a_0) \neq 0$.
Hence we assume that $L(X)=a_0X + a_1X^{q} + \cdots +a_{n-1}X^{q^{n-1}} \in \F_{q^n}[X]$ with $(a_1,a_2, \ldots, a_{n-1}) \neq (0,0, \ldots, 0)$.

First we recall some notation and basic facts from coding theory  (see, for example, \cite{macwilliams_theory_1977}).
Let $N=q^n-1$. 
A code of length $N$ over $\F_q$ is just a nonempty subset of $\F_q^N$. It is called a {\em linear} code if it is a vector space over $\F_q$. The set $C^\bot$ of all $N$-tuples in $\F_q^N$ orthogonal to all codewords of a linear code $C$ with respect to the usual inner product on $\F_q^N$ is called the {\em dual code} of $C$.
The Hamming weight of an arbitrary $N$-tuple $\bold{u}=(u_0,u_1, \ldots,u_{N-1}) \in \F_q^N$ is
\[
||\bold{u}||=|\{0 \le i \le N-1: u_i \neq 0\}|.
\]
A {\em cyclic} code of length $N$ over $\F_q$ is an ideal $C$ of the quotient ring $R=\F_q[X]/\langle X^N-1\rangle$. Here a codeword $(c_0,c_1, \ldots, c_{N-1})\in \F_q^N$ of $C$ corresponds to an element $c_0 + c_1X + \cdots+ c_{N-1}X^{N-1}+\langle X^N-1 \rangle\in C$. All ideals of $R$ are principal. The monic polynomial $g(X)$ of the least degree
such that $C=\langle g(X)\rangle/\langle X^N-1 \rangle$ is called the {\it generator} polynomial of $C$.
The dual $C^\bot$ is cyclic with generator polynomial $X^{\deg h}h(X^{-1})/h(0)$, where $h(X)=(X^N-1)/g(X)$.

If $\theta \in \F_{q^n}$ is a root of $g(X)$, then so is $\theta^q$. A set $B \subset \F_{q^n}$ is called a {\it basic zero set} of $C$ if both of the following conditions are satisfied:
\begin{itemize}
\item $\{\theta^{q^i}: \theta \in B, 0 \le i \le n-1\}$ is the set of the roots of $g(X)$.
\item If $\theta_1, \theta_2 \in B$ with $\theta_1^{q^i}=\theta_2$ for some integer $i$, then $\theta_1=\theta_2$.
\end{itemize}

The following proposition gives a natural connection to some cyclic codes. Some arguments in its proof will also be used in the next section.

\begin{proposition} \label{proposition.cyclic codes}
Let $\gamma$ be a primitive element of $\F_{q^n}^*$. Let $C$ be the cyclic code of length $N=q^n-1$ over $\F_q$
whose dual code $C^{\perp}$ has
\[
\{1, \gamma^{q-1}, \gamma^{q^2-1}, \ldots, \gamma^{q^{n-1}-1}\}
\]
as a basic zero set. We have the following: There exists a $q$-linearized polynomial
$L(X)=a_0X + a_1 X^{q} + \cdots+ a_{n-1}X^{q^{n-1}} \in \F_{q^n}[X]$ with $(a_1, a_2, \ldots, a_{n-1}) \neq (0,0, \ldots,0)$ such that $\Tr_{q^n/q}(L(x)/x) \neq 0$ for all $x \in \F_{q^n}^*$ if and only if the cyclic code $C$ has a codeword $(c_0,c_1, \ldots, c_{N-1})$ of Hamming weight $N$ such that $(c_0,c_1, \ldots, c_{N-1}) \neq u(1,1,\ldots,1)$ for any $u \in \F_q^*$. Moreover the dimension of $C$ over $\F_q$ is $n^2-n+1$.
\end{proposition}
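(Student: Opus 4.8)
The plan is to establish the equivalence between the existence of a suitable $L(X)$ and the existence of a full-weight codeword in $C$ that is not a scalar multiple of the all-ones vector, and then separately compute $\dim_{\F_q} C$. For the equivalence, I would start by unwinding what membership in $C$ means via the basic zero set of $C^\perp$. Fix an ordering $\gamma^0,\gamma^1,\dots,\gamma^{N-1}$ of $\F_{q^n}^*$ and associate to a tuple $(c_0,\dots,c_{N-1})\in\F_q^N$ the polynomial $c(X)=\sum_j c_j X^j$. The standard duality statement says that $(c_0,\dots,c_{N-1})\in C$ if and only if $c(X)$ vanishes at every element of a basic zero set of $C$, equivalently (using the description of the dual's generator in terms of $h(X)=(X^N-1)/g(X)$) that the Mattson--Solomon transform of $c$ is supported on the exponents dictated by $C^\perp$'s zero set. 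Concretely, the condition that $C^\perp$ has basic zero set $\{1,\gamma^{q-1},\dots,\gamma^{q^{n-1}-1}\}$ forces the codewords of $C$ to be exactly the evaluation vectors $\bigl(\Tr_{q^n/q}(L(\gamma^j)/\gamma^j)\bigr)_{j}$ as $L$ ranges over $q$-linearized polynomials; this is the heart of the correspondence.

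The key computation I would carry out is to verify that for a $q$-linearized $L(X)=\sum_{i=0}^{n-1}a_i X^{q^i}$, the vector $c_j:=\Tr_{q^n/q}(L(\gamma^j)/\gamma^j)$ lies in $C$ and that this map $L\mapsto (c_j)_j$ is the desired parametrization. Here I would reuse the expansion from Section~\ref{se:large n, q=p}, namely
\[
\Tr_{q^n/q}\Bigl(\frac{L(x)}{x}\Bigr)=\sum_{0\le i,k\le n-1}a_i^{q^k}x^{q^k(q^i-1)},
\]
so that as a function of $x=\gamma^j$ the word $c$ is a $\F_q$-linear combination of the monomials $x^{q^k(q^i-1)}=x^{s(k,i)}$ (in the notation of the previous section), whose exponents are precisely the $q^k$-conjugates of $q^i-1$. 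These exponents are exactly the cyclotomic cosets generated by $\{q^{i}-1: 0\le i\le n-1\}=\{0,q-1,q^2-1,\dots,q^{n-1}-1\}$, matching the prescribed basic zero set of $C^\perp$. This identifies $C$ with the $\F_q$-span of the functions $x\mapsto \Tr_{q^n/q}(L(x)/x)$. Under this identification, a codeword has Hamming weight exactly $N$ precisely when $\Tr_{q^n/q}(L(x)/x)\ne 0$ for all $x\in\F_{q^n}^*$, i.e.\ condition \eqref{eq:main_L}; and it equals $u(1,\dots,1)$ for some $u\in\F_q^*$ exactly when $L(X)=a_0X$ with $\Tr_{q^n/q}(a_0)\ne 0$ (the constant-trace case). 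So the existence of $L$ with $(a_1,\dots,a_{n-1})\ne(0,\dots,0)$ satisfying \eqref{eq:main_L} is equivalent to the existence of a full-weight codeword not proportional to the all-ones vector, which is exactly the claimed equivalence.

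For the dimension, I would count the size of the relevant union of $q$-cyclotomic cosets modulo $N=q^n-1$. Since $C^\perp$ has generator whose roots are the $q$-conjugates of $\{1,\gamma^{q-1},\dots,\gamma^{q^{n-1}-1}\}$, the exponents appearing are the cyclotomic cosets of $0,q-1,q^2-1,\dots,q^{n-1}-1$. The coset of $0$ is $\{0\}$, a single element. Each coset of $q^i-1$ for $1\le i\le n-1$ has full size $n$, and I would check these $n-1$ cosets are pairwise distinct and disjoint: this is a routine $q$-adic digit argument, since $q^i-1=s(0,i)$ has a block of $i$ consecutive $1$-digits, and its $q$-shifts cannot coincide for distinct $i$ in the range $1,\dots,n-1$. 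Hence $\dim_{\F_q}C^\perp=1+(n-1)n=n^2-n+1$, and then $\dim_{\F_q}C=N-\dim_{\F_q}C^\perp$; but it is cleaner to observe that the $L\mapsto c$ parametrization shows $C$ itself is spanned by the $n^2$ functions $x\mapsto \Tr_{q^n/q}(a_i x^{q^i-1})$ with one linear relation (the $a_0$-term contributes a single dimension rather than $n$, since $\Tr_{q^n/q}(a_0)$ is constant in $x$), giving $\dim_{\F_q}C=n^2-n+1$ directly. The main obstacle I anticipate is the bookkeeping in the cyclotomic-coset count—namely verifying disjointness and full size of the cosets of $q^i-1$—and being careful that the $i=0$ term collapses to a one-dimensional (constant) contribution rather than an $n$-dimensional one, since this is precisely what distinguishes $\dim C = n^2-n+1$ from a naive count of $n^2$.
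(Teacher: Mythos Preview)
Your approach is essentially the same as the paper's: both identify $C$ with the set of evaluation vectors $\bigl(\Tr_{q^n/q}(a_0+a_1x^{q-1}+\cdots+a_{n-1}x^{q^{n-1}-1})\bigr)_{x\in\F_{q^n}^*}$ (the paper phrases this as an application of Delsarte's theorem, you via the Mattson--Solomon transform), and both count cyclotomic cosets by the base-$q$ digit description $q^i-1=(q-1,\dots,q-1,0,\dots,0)$.

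One bookkeeping slip to fix: the quantity $1+(n-1)n$ you compute is the total number of zeros of the generator polynomial of $C^\perp$, i.e.\ $\deg g_{C^\perp}=N-\dim_{\F_q}C^\perp=\dim_{\F_q}C$, not $\dim_{\F_q}C^\perp$. So your sentence ``$\dim_{\F_q}C^\perp=n^2-n+1$, and then $\dim_{\F_q}C=N-\dim_{\F_q}C^\perp$'' has the two dimensions swapped and would yield the wrong value for $\dim C$. Your alternative argument via the parametrization $L\mapsto c$ is correct and gives $\dim_{\F_q}C=n^2-n+1$ directly, in agreement with the paper.
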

\begin{proof}
We first show that $\{1,\gamma^{q-1}, \gamma^{q^2-1}, \ldots, \gamma^{q^{n-1}-1}\}$ is a basic zero set. This means that the exponents
$0,q-1,q^2-1, \ldots, q^{n-1}-1$ are in distinct $q$-cyclotomic cosets modulo $q^n-1$. For $0 \le d < q^n-1$, let
$\psi(d)$ be the base $q$ digits of $d$, i.e., $\psi(d)=(d_0,d_1, \ldots, d_{n-1})$, where  $0 \le d_i \le q-1$ are integers such that $d=\sum_{i=0}^{n-1}d_iq^i$.
 Let $\overline{0}, \overline{q-1}, \overline{q^2-1}, \ldots, \overline{q^{n-1}-1}$ denote the
$q$-cyclotomic cosets of $0,q-1, q^2-1, \ldots, q^{n-1}-1$ modulo $q^n-1$. Their images under $\psi$ are
\[
\begin{array}{l}
\psi(\overline{0})=\{(0,0, \ldots, 0)\}, \\ \\
\psi(\overline{q-1})=\{(q-1,0,0, \ldots, 0), (0,q-1,0, \ldots, 0), \ldots, (0,0,, \ldots, 0,q-1)\}, \\ \\
\psi(\overline{q^2-1})=\{(q-1,q-1,0, \ldots, 0), (0,q-1,q-1, \ldots, 0), \ldots, (q-1,0, \ldots, 0, q-1)\}, \\
\hspace{1cm} \vdots \\
\psi(\overline{q^{n-1}-1})=\{(q-1, \ldots, q-1,0), (0,q-1, \ldots, q-1), \ldots, (q-1, \ldots, q-1.0)\}.
\end{array}
\]
Note that the elements in each row are obtained via cyclic shifts of the first element of the row. This proves
that $0,q-1,q^2-1, \ldots, q^{n-1}-1$ are in distinct $q$-cyclotomic cosets modulo $q^n-1$. Moreover the cardinality
of the union of their $q$-cyclotomic cosets modulo $q^n-1$ is
\[
1+(n-1)n=n^2-n+1.
\]
Therefore the dimensions of $C$ is $n^2-n+1$.
Finally using Delsarte's Theorem \cite[Theorem 9.1.2]{stichtenoth_algebraic_2008} we obtain that the codewords of $C$ in $\F_q^N$ are
\[
C=\left\{ \left( \Tr_{q^n/q}(a_0 + a_1x^{q-1} + \cdots + a_{n-1}x^{q^{n-1}-1})\right)_{x \in \F_{q^n}^*}: a_0,a_1, \ldots, a_{n-1} \in \F_{q^n}\right\}.
\]
Note that $\Tr_{q^n/q}(L(x)/x) = u$ for all $x \in \F_{q^n}^*$ if and only if $\Tr_{q^n/q}(L(X)/X)\equiv u \pmod{X^{q^n}-X}$, from which it follows that $(a_1, a_2, \ldots, a_{n-1}) = (0,0, \ldots,0)$. This completes the proof.
\end{proof}

\section{Some conditions via the Hasse-Weil-Serre bound for general $\F_q$}\label{se:large n and curves}

In this section we obtain some necessary conditions for the $q$-linearized polynomials $L(X) \in \F_{q^n}[X]$ such that $\Tr_{q^n/q}(L(x)/x) \neq 0$
for all $x \in \F_{q^n}^*$.

The Hasse-Weil-Serre  bound for algebraic curves over finite fields implies upper and lower bounds on the Hamming weights of codewords of cyclic codes (see \cite{guneri_weil-serre_2008,wolfmann_new_1989}). Using this method we obtain Theorem~\ref{proposition HWS}.

First we introduce further notations. Let $\Res: \Z \rightarrow \{0,1, \ldots, q^n-2\}$ be the map such that $\Res(j) \equiv j \pmod{q^n-1}$. Put $q=p^m$ with $m \ge 1$, where $p$ is the characteristic of $\F_q$. Let ${\rm Lead}:\{0,1, \ldots, p^{mn}-2\} \rightarrow\{0,1, \ldots, p^{mn}-2\}$ be the map sending $j$ to the smallest integer $k$ in $\{0,1, \ldots, p^{mn-2}\}$ such that $k \equiv jp^u \pmod{p^{mn}-1}$ for some integer $u\ge 0$. In other words, ${\rm Lead}(j)$ is the smallest nonnegative integer in the $p$-cyclotomic coset of $j$ modulo $p^{mn}-1$. It is important to note that if $0 < j< p^{mn}-1$, then ${\rm Lead}(j)$ is a nonnegative integer which is coprime to $p$.

\begin{theorem} \label{proposition HWS}
Let $L(X)=a_0X+a_1X^q+ \cdots +a_{n-1}X^{q^{n-1}} \in \F_{q^n}[X]$ be a $q$-linearized polynomial with $(a_1,\ldots, a_{n-1}) \neq (0, \ldots, 0)$.
For each $1 \le j \le q^n-2$ with $\gcd(j,q^n-1)=1$, let
\[
\ell(j)=\max\{{\rm Lead}(\Res(j(q^i-1))): 1 \le i \le n-1 \; \mbox{and} \; a_i \neq 0\}.
\]
Moreover, let
\begin{equation}\label{def.ell}
    \ell=\min_j \ell(j),
\end{equation}
where the minimum is over all integers $1 \le j \le q^n-2$ with $\gcd(j,q^n-1)=1$. Then we have the following:
\begin{itemize}
\item Case $\Tr_{q^n/q}(a_0) \neq 0$: If
\begin{equation}\label{case.Tr.noteq}
q^n+1 - \frac{(q-1)(\ell-1)}{2}\lfloor 2 q^{n/2} \rfloor >1,
\end{equation}
then it is impossible that $\Tr_{q^n/q}(L(x)/x) \neq 0 $ for all $x \in \F_{q^n}^*$.

\item Case $\Tr_{q^n/q}(a_0) = 0$: If
\begin{equation}\label{case.Tr.eq}
q^n+1 - \frac{(q-1)(\ell-1)}{2} \lfloor 2 q^{n/2} \rfloor >q+1,
\end{equation}
then it is impossible that $\Tr_{q^n/q}(L(x)/x) \neq 0 $ for all $x \in \F_{q^n}^*$.

\end{itemize}

\end{theorem}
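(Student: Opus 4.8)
The plan is to translate the condition ``$\Tr_{q^n/q}(L(x)/x)\ne 0$ for all $x\in\F_{q^n}^*$'' into a statement about the number of $\F_{q^n}$-rational points on an Artin--Schreier curve, and then to force a contradiction via the Hasse--Weil--Serre bound. Writing $f(x)=L(x)/x=\sum_{i=0}^{n-1}a_i x^{q^i-1}$, I would set $Z=|\{x\in\F_{q^n}^*:\Tr_{q^n/q}(f(x))=0\}|$ and aim to prove $Z\ge 1$, which directly contradicts $\Tr_{q^n/q}(L(x)/x)\ne 0$ for all $x\in\F_{q^n}^*$. The key observation is that, for the curve $\mathcal X:\;y^q-y=f(x)$ over $\F_{q^n}$, the fibre over a point $x_0\in\F_{q^n}$ consists of exactly $q$ points when $\Tr_{q^n/q}(f(x_0))=0$ and is empty otherwise; hence the smooth projective model $\overline{\mathcal X}$ satisfies $\#\overline{\mathcal X}(\F_{q^n})=q(Z+\delta)+1$, where $\delta=1$ if $\Tr_{q^n/q}(a_0)=0$ and $\delta=0$ otherwise. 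Here $\delta$ records whether $x_0=0$ is itself a zero (since $f(0)=a_0$), and the $+1$ is the unique, totally ramified place at infinity.

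Next I would reduce the genus. Because $x\mapsto x^j$ permutes $\F_{q^n}^*$ for every $j$ with $\gcd(j,q^n-1)=1$, the count $Z$ is unchanged if $f$ is replaced by $f(x^j)$, whose monomials carry the exponents $\Res(j(q^i-1))$. Viewing $\mathcal X$ as an elementary abelian $\F_q$-cover of the projective line, its $q-1$ nontrivial degree-$p$ Artin--Schreier subcovers each admit the reduction $c\,x^{pe}\equiv c^{1/p}x^{e}\pmod{\wp_p}$, so the conductor at infinity of each subcover is governed by the base-$p$ reduced pole order, i.e.\ by $\mathrm{Lead}(\Res(j(q^i-1)))$. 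Taking the maximum over the active monomials ($a_i\ne 0$) and then the minimum over $j$ shows, via the conductor--discriminant formula and Riemann--Hurwitz, that the genus of $\overline{\mathcal X}$ is at most $\frac{(q-1)(\ell-1)}{2}$, with $\ell$ as in \eqref{def.ell}. It is precisely here that the coprimality of $\mathrm{Lead}$ values to $p$ is used: it makes the genus formula clean and guarantees the cover is absolutely irreducible.

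Then I would feed this into the Hasse--Weil--Serre bound $|\#\overline{\mathcal X}(\F_{q^n})-(q^n+1)|\le g\,\lfloor 2q^{n/2}\rfloor$. Combined with $\#\overline{\mathcal X}(\F_{q^n})=q(Z+\delta)+1$ and $g\le \frac{(q-1)(\ell-1)}{2}$, this yields $qZ\ge q^n-\frac{(q-1)(\ell-1)}{2}\lfloor 2q^{n/2}\rfloor-q\delta$. When $\Tr_{q^n/q}(a_0)\ne 0$ we have $\delta=0$, so hypothesis \eqref{case.Tr.noteq}, which is equivalent to $q^n-\frac{(q-1)(\ell-1)}{2}\lfloor 2q^{n/2}\rfloor>0$, forces $Z>0$; when $\Tr_{q^n/q}(a_0)=0$ we have $\delta=1$, and hypothesis \eqref{case.Tr.eq}, equivalent to $q^n-\frac{(q-1)(\ell-1)}{2}\lfloor 2q^{n/2}\rfloor>q$, again forces $Z>0$. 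In either case $Z\ge 1$ produces an $x\in\F_{q^n}^*$ with $\Tr_{q^n/q}(L(x)/x)=0$, the desired contradiction.

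The step I expect to be the main obstacle is the genus estimate. One must verify that after the substitution $x\mapsto x^j$ the top-degree term does not cancel, so that the reduced pole order of each subcover is exactly the relevant $\mathrm{Lead}$ value and the cover remains absolutely irreducible, and that all $q-1$ nontrivial characters of the $\F_q$-cover contribute conductor exponent $\mathrm{Lead}(\cdots)+1$, so that the conductor--discriminant formula gives genus $\le \frac{(q-1)(\ell-1)}{2}$. This is exactly where the Weil--Serre machinery for cyclic codes (as in \cite{guneri_weil-serre_2008,wolfmann_new_1989}) and the cyclotomic-coset bookkeeping of Section~\ref{se:large n and cyclic codes} are needed; by contrast, the $x=0$ bookkeeping through $\delta$ is the elementary ingredient that separates the two thresholds $>1$ and $>q+1$.
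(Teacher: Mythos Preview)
Your proposal is correct and follows essentially the same route as the paper: fix a $j$ achieving the minimum $\ell$, pass to the Artin--Schreier curve $Y^q-Y=f(x^j)$, decompose it as an elementary abelian $p$-cover into degree-$p$ subcovers $Y^p-Y=\mu f(x^j)$, birationally strip $p$-th powers from the exponents so that each subcover has reduced pole order $\ell$ (and is absolutely irreducible because $\gcd(\ell,p)=1$), sum the genera to get $g=\tfrac{(q-1)(\ell-1)}{2}$, and then contrast the Hasse--Weil--Serre lower bound with the point count $N=q(Z+\delta)+1$. The only cosmetic differences are that the paper invokes the Garcia--Stichtenoth genus decomposition \cite{garcia_elementary_1991} explicitly (rather than conductor--discriminant) and computes the genus as an equality rather than an upper bound; your worry about cancellation of the top term is dispatched exactly as you anticipate, by noting that $0,q^{t_1}-1,\dots,q^{t_s}-1$ lie in distinct $p$-cyclotomic cosets modulo $q^n-1$, hence so do their images under multiplication by $j$, which forces the reduced exponents $j_1<\cdots<j_s=\ell$ to be distinct.
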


\begin{proof}
If $\gamma $ is a primite element of $\F_{q^n}^*$, then $\gamma^j$ is also a primitive element of $\F_{q^n}^*$ for all
$1 \le j \le q^n-2$ with $\gcd(j,q^n-1)=1$.
Note that
\[
\Tr_{q^n/q}(L(x)/x)=\Tr_{q^n/q}(a_0 + a_1 x^{q-1} + \cdots + a_{n-1}x^{q^{n-1}-1})
\neq 0 \;\; \mbox{for all $x \in \F_{q^n}^*$},
\]
if and only if
\[
\Tr_{q^n/q}(L(x^j)/x^j)= \Tr_{q^n/q}(a_0 + a_1 x^{j(q-1)} + \cdots + a_{n-1}x^{j(q^{n-1}-1)})
\neq 0 \;\; \mbox{for all $x \in \F_{q^n}^*$}.
\]
Moreover, $x^{j(q^i-1)}=x^{\Res(j(q^i-1))}$ for $x \in \F_{q^n}^*$, $1 \le i \le n-1$ and $1 \le j \le q^n-2$.

Recall that $\ell$ is defined in \eqref{def.ell}.
We choose and fix an integer $1 \le j \le q^n-2$ with $\gcd(j,q^n-1)=1$ such that
$\ell=\ell(j)$.

Let $a_{t_1},\dots,a_{t_s}$ be the nonzero coefficients among $a_1,\dots,a_{n-1}$. (Note that $s\ge 1$ since $(a_1,\dots,a_{n-1})\ne(0,\dots,0)$.) Since $0,q^{t_1}-1,\dots,q^{t_s}-1$ belong to different $p$-cyclotomic cosets modulo $q^n-1$ and $\text{gcd}(j,q^n-1)=1$, we have that $0,j(q^{t_1}-1),\dots,j(q^{t_s}-1)$ belong to different $p$-cyclotomic cosets modulo $q^n-1$. Thus $\Res(j(q^{t_i}-1))=j_ip^{u_i}$, where $u_i\ge 0$, $p\nmid j_i$, $1\le i\le s$, and $j_1,\dots,j_s$ are distinct. We may assume $0<j_1<j_2<\cdots<j_s=\ell$. We have 
\[
a_0 + a_1 X^{\Res(j(q-1))} + \cdots + a_{n-1}X^{\Res(j(q^{n-1}-1))}=a_0 + b_{1} X^{j_1p^{u_1}} + \cdots + b_{s} X^{j_sp^{u_s}},
\]
where $b_i=a_{t_i}$, $1\le i\le s$.

Let $\chi$ be the Artin-Shreier type algebraic curve over $\F_{q^n}$ given by
\[
\chi: Y^q-Y=a_0 + b_1X^{j_1p^{u_1}} + \cdots + b_s X^{j_sp^{u_s}}.
\]
Let $S \subset \F_{p^{mn}}^*$ be a complete set of coset representatives of $\F_p^*$ in $\F_{p^{mn}}^*$.
For $\mu \in S$, let $\chi_\mu$ be the Artin-Shreier type algebraic curve over $\F_{q^n}$ given by
\[
\chi_\mu: Y^p-Y=\mu(a_0 + b_1X^{j_1p^{u_1}} + \cdots + b_s X^{j_sp^{u_s}}).
\]
Note that $\chi_\mu$ is a degree $p$ covering of the projective line.
Using \cite[Theorem 2.1]{garcia_elementary_1991} the genus $g(\chi)$ of $\chi$ is computed in terms of the genera of $\chi_\mu$ as
\be \label{genera-sum}
g(\chi)=\sum_{\mu \in S} g(\chi_\mu).
\ee

Now we determine the genus $g(\chi_\mu)$ of $\chi_\mu$. We choose and fix $\mu \in S$. Let $c_1,c_2, \ldots, c_s \in \F_{p^{mn}}^*$
be such that
\be
c_1^{p^{u_1}}=\mu b_1, \;
c_2^{p^{u_2}}=\mu b_2,\; \ldots,\;
c_s^{p^{u_s}}=\mu b_s.
\nonumber\ee
Let $\chi^\prime_\mu$ be the Artin-Schreier type algebraic curve over $\F_{q^n}$ given by
\[
\chi^\prime_\mu: Y^p-Y=\mu a_0 + c_1X^{j_1} + \cdots + c_s X^{j_s}.
\]
We observe that $\chi_\mu$ and $\chi^\prime_\mu$ are birationally isomorphic and hence the genera $g(\chi_\mu)$ and $g(\chi^\prime_\mu)$ are the same.
Indeed, if $u_1 \ge 1$, then
\be
\begin{array}{rcl}
Y^p-Y & =& \mu a_0 + c_1^{p^{u_1}}X^{j_1 p^{u_1}} + c_2^{p^{u_2}}X^{j_2p^{u_2}} + \cdots + c_s^{p^{u_s}}X^{j_sp^{u_s}} \\ \\
&=& \mu a_0 + \left( c_1^{p^{u_1-1}}X^{j_1p^{u_1-1}}\right)^p + c_2^{p^{u_2}}X^{j_2p^{u_2}} + \cdots + c_s^{p^{u_s}}X^{j_sp^{u_s}}
\end{array}
\nonumber\ee
and hence
\be
\begin{array}{l}
\displaystyle
\left[ Y-\left( c_1^{p^{u_1-1}}X^{j_1p^{u_1-1}}\right) \right]^p - \left[ Y - \left( c_1^{p^{u_1-1}}X^{j_1p^{u_1-1}}\right) \right] \\ \\
=\mu a_0 + c_1^{p^{u_1-1}}X^{j_1p^{u_1-1}} + c_2^{p^{u_2}}X^{j_2p^{u_2}} + \cdots + c_s^{p^{u_s}}X^{j_sp^{u_s}}.
\end{array}
\nonumber\ee
This gives a birational isomorphism between $\chi_\mu$ and the curve given by
\be
Y^p-Y=\mu a_0 + c_1^{p^{u_1-1}}X^{j_1p^{u_1-1}} + c_2^{p^{u_2}}X^{j_2p^{u_2}} + \cdots + c_s^{p^{u_s}}X^{j_sp^{u_s}}.
\nonumber\ee
By induction on $u_1$ we obtain a birational isomorphism between $\chi_\mu$ and the curve given by
\be
Y^p-Y=\mu a_0 + c_1X^{j_1} + c_2^{p^{u_2}}X^{j_2p^{u_2}} + \cdots + c_s^{p^{u_s}}X^{j_sp^{u_s}}.
\nonumber\ee
Applying the same method to the monomials $c_2^{p^{u_2}}X^{j_2p^{u_2}}, \ldots, c_s^{p^{u_s}}X^{j_sp^{u_s}}$ we conclude that
the curves $\chi_\mu$ and $\chi^\prime_\mu$ are birationally isomorphic.

Recall that the integers $0,j_1, \ldots, j_s$ are in distinct $p$-cyclotomic cosets modulo $q^n-1$.
As $c_s \neq 0$ and $\gcd(j_s,p)=1$ we obtain that $\chi_\mu^\prime$ is absolutely irreducible over $\F_{q^n}$.
Moreover $s \ge 1$
and  $j_s=\ell$.  Hence by \cite[Proposition 3.7.8]{stichtenoth_algebraic_2008} we have
\[
g(\chi_\mu)=g(\chi_\mu^\prime)=(p-1)(\ell-1)/2,
\]
which is independent from the choice of $\mu \in S$. Using (\ref{genera-sum}) for the genus $g(\chi)$ of $\chi$
we obtain that
\be
g(\chi)=\sum_{\mu \in S} g(\chi_\mu)= |S| (p-1)(\ell -1)/2=(q-1)(\ell-1)/2.
\nonumber\ee

Assume that $\Tr_{q^n/q}(L(x)/x) \neq 0$ for all $x \in \F_{q^n}^*$. The number $N(\chi)$ of
$\F_{q^n}$-rational points of $\chi$ is
\begin{equation}\label{ep1.propostion HWS}
\begin{array}{cl}
N(\chi)= 1 + q |\{x \in \F_{q^n}: \Tr(L(x)/x)=0\}|=\left\{\begin{array}{cl}
1 & \mbox{if $\Tr_{q^n/q}(a_0) \neq 0$}, \\
q+1 & \mbox{if $\Tr_{q^n/q}(a_0) = 0$}.
\end{array}
\right.
\end{array}
\end{equation}
The Hasse-Weil-Serre lower bound on $N(\chi)$ (see, for example, \cite[Theorem 5.3.1]{stichtenoth_algebraic_2008}) implies that
\begin{equation} \label{ep2.propostion HWS}
N(\chi) \ge q^n+1 - \frac{(q-1)(\ell-1)}{2} \lfloor 2q^{n/2} \rfloor.
\end{equation}
Combining \eqref{case.Tr.noteq}, \eqref{case.Tr.eq}, \eqref{ep1.propostion HWS} and \eqref{ep2.propostion HWS}, we complete the proof.
\end{proof}

The following corollary, which is a restatement of Theorem~\ref{proposition HWS}, shows that the distribution of the nonzero
coefficients of a $q$-linearized polynomial $L$ satisfying $\Tr_{q^n/q}(L(x)/x) \neq 0$ for all $x \in \F_{q^n}^*$ is subject to certain restrictions.

\begin{corollary} \label{corollary HWS}
Let $L(X)=a_0X+a_1X^q+ \cdots +a_{n-1}X^{q^{n-1}} \in \F_{q^n}[X]$ be a $q$-linearized polynomial with $(a_1,\ldots, a_{n-1}) \neq (0, \ldots, 0)$.
Assume that $\Tr_{q^n/q}(L(x)/x) \neq 0$ for all $x \in \F_{q^n}^*$. Then for each integer $1 \le j \le q^n-2$ with
$\gcd(j,q^n-1)=1$ we have the following:
\begin{itemize}
\item [\rm(i)] If $\Tr_{q^n/q}(a_0) \neq 0$, there exits $1 \le i \le n-1$ such that $a_i \neq 0$ and
\[
{\rm Lead}(\Res(j(q^i-1))) \ge 1 + \left\lceil \frac{2q^n}{(q-1) \lfloor 2q^{n/2} \rfloor} \right\rceil.
\]

\item [\rm(ii)] If $\Tr_{q^n/q}(a_0) = 0$, there exits $1 \le i \le n-1$ such that $a_i \neq 0$ and
\[
{\rm Lead}(\Res(j(q^i-1))) \ge 1 + \left\lceil \frac{2(q^n-q)}{(q-1) \lfloor 2q^{n/2} \rfloor} \right\rceil.
\]
\end{itemize}
\end{corollary}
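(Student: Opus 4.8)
The plan is to obtain Corollary~\ref{corollary HWS} as a direct contrapositive reformulation of Theorem~\ref{proposition HWS}, so no new machinery is needed; the entire argument consists of rearranging the inequalities of that theorem and handling integrality with care.

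First I would fix an integer $1\le j\le q^n-2$ with $\gcd(j,q^n-1)=1$ and recall from Theorem~\ref{proposition HWS} the quantities $\ell(j)=\max\{{\rm Lead}(\Res(j(q^i-1))):1\le i\le n-1,\ a_i\ne 0\}$ and $\ell=\min_j\ell(j)$. Under the standing hypothesis that $\Tr_{q^n/q}(L(x)/x)\ne 0$ for all $x\in\F_{q^n}^*$, Theorem~\ref{proposition HWS} forces the triggering inequality of the relevant case to fail. In case (i), where $\Tr_{q^n/q}(a_0)\ne 0$, the failure of \eqref{case.Tr.noteq} reads
\[
q^n+1-\frac{(q-1)(\ell-1)}{2}\lfloor 2q^{n/2}\rfloor\le 1,
\]
which I would rearrange to
\[
\ell-1\ge\frac{2q^n}{(q-1)\lfloor 2q^{n/2}\rfloor}.
\]
Since $\ell$ is an integer (being a value of ${\rm Lead}$), and an integer that is at least a real number is at least its ceiling, this upgrades to $\ell\ge 1+\lceil 2q^n/((q-1)\lfloor 2q^{n/2}\rfloor)\rceil$.

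To finish, I would transfer this global bound to the chosen $j$: because $\ell=\min_j\ell(j)\le\ell(j)$, the same lower bound holds for $\ell(j)$, and since $\ell(j)$ is a maximum of ${\rm Lead}(\Res(j(q^i-1)))$ over the indices $i$ with $a_i\ne 0$, that maximum is attained at some such $i$, which is exactly the index and inequality asserted in (i). Case (ii), with $\Tr_{q^n/q}(a_0)=0$, is word-for-word identical once the failure of \eqref{case.Tr.noteq} is replaced by that of \eqref{case.Tr.eq}; the constant on the right-hand side changes from $1$ to $q+1$, which turns the numerator $2q^n$ into $2(q^n-q)$ and yields the bound in (ii).

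I do not anticipate a genuine obstacle, as the statement is labeled a restatement of the theorem. The only points that require attention are the integrality step that promotes the real lower bound on $\ell-1$ to its ceiling, and verifying that the two rearrangements land precisely on the stated numerators $2q^n$ and $2(q^n-q)$; the passage from $\ell$ to each $\ell(j)$ via $\ell=\min_j\ell(j)$ is what makes the conclusion hold for \emph{every} admissible $j$ rather than only for a minimizing one.
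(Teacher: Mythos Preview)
Your proposal is correct and matches the paper's intent: the paper presents the corollary as a direct restatement of Theorem~\ref{proposition HWS} without a separate proof, and your contrapositive reading with the integrality step and the passage from $\ell$ to each $\ell(j)$ via $\ell=\min_j\ell(j)$ is exactly how one unpacks that restatement.
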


\bibliographystyle{plain}

\end{document}